\newcommand{\beq}{\begin{equation}}
\newcommand{\enq}{\end{equation}}
\newtheorem{Theorem}{Theorem}[section]
\newtheorem{Lemma}[Theorem]{Lemma}
\newtheorem{Corollary}[Theorem]{Corollary}
\newtheorem{Definition}[Theorem]{Definition}
\newtheorem{Proposition}[Theorem]{Proposition}
\newtheorem{Remark}[subsection]{Remark}
\newcommand{\benu}{\begin{enumerate}}
\newcommand{\beqa}{\begin{eqnarray}}
\newcommand{\beqan}{\begin{eqnarray*}}
\newcommand{\eay}{\end{array}}
\newcommand{\edm}{\end{displaymath}}
\newcommand{\eenu}{\end{enumerate}}
\newcommand{\eeq}{\end{equation}}
\newcommand{\eeqa}{\end{eqnarray}}
\newcommand{\eeqan}{\end{eqnarray*}}
\newcommand{\br}{\begin{Remark}}
\newcommand{\er}{\end{Remark}}
\newcommand{\bqa}{\begin{eqnarray}}
\newcommand{\eqa}{\end{eqnarray}}
\newcommand{\bqw}{\begin{eqnarray*}}
\newcommand{\eqw}{\end{eqnarray*}}
\newcommand{\bea}{\begin{array}{cc}}
\newcommand{\ena}{\end{array}}
\def\V{\mathcal{V}}
\def\Z{\mathbb{Z}}
\def\R{\mathcal{R}}
\def\C{\mathbb{C}}
\begin{document}
\pagenumbering{arabic} \setcounter{page}{1}

\begin{center}

%\vspace{5in}
{\Large \bf
Simple smooth modules over the Ramond algebra and applications to vertex operator superalgebras}\footnote[2]{ Y.C. is partially supported by the CSC of China (Grant No. 202306630062).  Y.Y. is partially supported by the National Natural Science Foundation of China (12271345 and 12071136).
K.Z. is partially supported by NSERC (311907-2020).}\\
\vspace{0.3in}Yulu Chen$^{1}$,  Yufeng Yao$^{2}$, Kaiming Zhao$^{3, 4}$\\
\vspace{0.3in}

$^{1}$\ School of Mathematics and Statistics, Donghua University, Shanghai 201620, China\\
$^{2}$\ Department of Mathematics, Shanghai Maritime University, Shanghai 201306, China\\
$^{3}$\ Department of Mathematics, Wilfrid Laurier University, Waterloo, Ontario N2L 3C5, Canada\\
$^{4}$\ School of Mathematics and Statistics,
Xinyang Normal University,
Xinyang 464000,   China\\
Email: yulchen@wlu.ca,  yfyao@shmtu.edu.cn, kzhao@wlu.ca
\end{center}
\vspace{3mm}

\begin{abstract}\normalsize
Simple smooth modules over the Virasoro algebra and one of the super-Virasoro algebras, named the Neveu-Schwarz algebra, have been classified.
This problem remained unsolved for the other super-Virasoro algebra called the Ramond algebra.
In this paper, all simple smooth modules over the Ramond algebra are classified.
More precisely, we show that a simple smooth module over the Ramond algebra is either a simple highest weight module or isomorphic to an induced module from a simple module over a finite dimensional solvable Lie superalgebra.
As an application we obtain all simple weak $\psi$-twisted modules over some vertex operator superalgebras.
\end{abstract}
\vspace{3mm}

\noindent{\bf Key words: }Lie superalgebras,  Ramond algebra, Simple modules, Smooth modules

\

\noindent{{\bf Mathematics Subject Classification 2020}:  17B65, 17B68, 17B69,17B70, 81R10.}

\section{Introduction}

The super-Virasoro algebras are  natural super generalizations of the Virasoro algebra, which are infinite dimensional Lie superalgebras with a long history in mathematical physics. There are two super extensions of the Virasoro algebra with particular importance in the conformal field theory and the superstring theory, called the Ramond algebra\cite{R1971} and Neveu-Schwarz algebra\cite{NS1971}. The even parts of both Lie superalgebras are isomorphic to the Virasoro algebra.

%Their even parts are both isomorphic to the Virasoro algebra.
%The mathematically rigorous definition of the superconformal algebras was introduced by Kac and van de Leur in \cite{KL1988}. Furthermore, Kac classified all ``physical" superconformal algebras in \cite{K1997}: the $N=0$ (Virasoro), $N=1$  (Super-Virasoro), $N=2, 3$ and $4$ superconformal algebras, the superalgebra $W(2)$ of all vector fields on the $N=2$ supercircle and the superalgebra $CK(6)$.

%The representation theory for the super-Virasoro algebras has attracted great attention of a lot of researchers since it has been playing a fundamental role in superstring theory and conformal theory.
The representation theory of the super-Virasoro algebras has attracted great attention since it plays a fundamental role in superstring theory and conformal theory. Some researchers developed the representation theory from the point of view of vertex algebras (see \cite{A1997, AJR2021, FFF, L, KW,Ba2000, BMRW2017, HM2002, Na2024, CMHY2024}).
%as demonstrated in works such as
The highest weight modules for the Ramond algebra and Neveu-Schwarz algebra were investigated by many physicists and mathematicians (see, e.g. \cite{D2001, D1986, D1987, D1989, GKO1986, MC1986}). All simple unitary weight modules with finite dimensional weight spaces over the super-Virasoro algebras, including highest and lowest weight modules, were classified in \cite{CP1988}. Iohara and Koga described the structure of Verma modules, pre-Verma modules and Fock modules over the  Ramond algebra in \cite{IK2003-1, IK2003-2, IK2006}. All simple Harish-Chandra modules over the  Ramond algebra were claimed firstly by Su in \cite{S1995}, and then Cai et al. confirmed the result by a new approach based on the $A$-cover theory in \cite{CLL2021}.

Recently some non-weight modules for many Lie (super)algebras have    been widely studied. For example, the $U(\mathfrak{h})$-free modules of rank 1 over the Ramond algebra and such modules of rank 2 over Neveu-Schwarz algebra were classified in \cite{YYX2020}.  Chen et al. introduced a new family of functors so that they recovered some old irreducible super-Virasoro modules, including those from the irreducible intermediate series as well as irreducible $U(\mathfrak{h})$-free modules, and provided some non-weight irreducible super-Virasoro modules in \cite{CDLP2024}.
In \cite{LPX2019}, Liu, Pei and Xia obtained the necessary and sufficient conditions for the irreducibility of the Whittaker modules over the super-Virasoro algebras.

Smooth modules are generalizations of both highest weight modules and Whittaker modules. Liu, Pei and Xia \cite{LPX2020} classified simple smooth modules over the  Neveu-Shwarz algebra in the spirit of the work of Mazorchuk and Zhao on the simple Virasoro modules in \cite{MZ2014}.
%In particular, They proved that  there is a 1-1 correspondence between the such simple smooth Neveu-Schwarz modules and simple module over finitely dimensional solvable Lie superalgebras.

Some simple smooth modules over the Ramond algebra were constructed by Chen in \cite{C2023},   without  giving the classification of simple smooth Ramond modules. In the present paper we are able to establish new approaches to obtain the classification for all simple smooth modules over the Ramond algebra.

The paper is organized as follows. In section \ref{Sec Pre}, we introduce some basic definitions and   known facts for later use. In section \ref{Sec Construction}, we construct a class of simple induced modules over Ramond algebra including Whittaker modules and the modules constructed in \cite{C2023}, see Theorem \ref{Th simple}. In section \ref{Sec Characterization} we prove that there is a one to one correspondence between such simple smooth Ramond modules and simple modules over a finite dimensional solvable Lie superalgebra. Furthermore, we characterize the simple highest weight Ramond modules by a new method based on the irreducibility of Verma modules over Ramond algebra, see Theorem \ref{h w m}.
And we can prove that a simple smooth module over the  Ramond algebra is either a simple highest weight module or isomorphic to an induced module from a simple module over a finite dimensional solvable Lie superalgebra,  see Theorem \ref{Th equivalent}. Besides, we are able to classify all simple modules over two finite dimensional solvable Lie superalgebras with small dimension related to Ramond algebra.
In section \ref{Sec weakmod}, we apply the results in Section 4 to obtain
all simple weak $\psi$-twisted  modules over some vertex operator superalgebras, see Proposition \ref{5.2}. In section \ref{Sec Examples}, we give some examples which not only recover the highest weight modules and Whittaker modules over the Ramond algebra, but also produce new simple modules.

We denote by $\mathbb{Z}$, $\mathbb{N}$, $\mathbb{Z}_+$ and $\mathbb{C}$ the sets of integers, nonnegative integers, positive integers and complex numbers, respectively. All vector superspaces (resp. superalgebras, supermodules) and spaces (resp. algebras, modules) are considered to be over $\mathbb{C}$.

\section{Preliminaries}\label{Sec Pre}
In this section we collect some related definitions and notations.
Let $V=V_{\overline{0}}\oplus V_{\overline{1}}$ be a $\mathbb{Z}_2$-graded vector space. We say that a vector $v$ is even (resp. odd) if ${v}\in V_{\overline{0}}$ (resp. $v\in V_{\overline{1}}$), and denote its parity as $|v|=\overline{0}$ (resp. $|v|=\overline{1}$). Vectors in $V_{\overline{0}}$ or $V_{\overline{1}}$ are called homogeneous. We make the convention that a vector $v$ is   homogeneous if  $|v|$ is used throughout the paper.

\begin{Definition}\label{Def R}
The  Ramond algebra
$\mathcal{R}=\mathcal{R}_{\overline{0}}\oplus\mathcal{R}_{\overline{1}}$ is the Lie superalgebra
$$\mathcal{R}={\bigoplus\limits_{m\in\mathbb{Z}}{\mathbb{C}L_m}} \oplus {\bigoplus\limits_{m\in\mathbb{Z}}{\mathbb{C}G_m}} \oplus {\mathbb{C}c},$$
where $\mathcal{R}_{\overline{0}}={\rm span}_{\mathbb{C}}\{L_m, c| m\in\mathbb{Z}\}$ and $\mathcal{R}_{\overline{1}}={\rm span}_{\mathbb{C}}\{G_m| m\in\mathbb{Z}\}$,
  subject to the following commutation relations:
\begin{align*}
    &[L_m, L_n]=(m-n)L_{m+n}+\delta_{m+n,0}\frac{m^3-m}{12}c, \\
    &[L_m, G_n]=\left(\frac{m}{2}-n\right)G_{m+n}, \\
    &[G_m, G_n]=2L_{m+n}+\frac{1}{3}\delta_{m+n,0}\left(m^2-\frac{1}{4}\right)c, \\
    &[\mathcal{R}, c]=0,
\end{align*}
for $m, n\in\mathbb{Z}$.
\end{Definition}

It follows from the definition that the even part $\mathcal{R}_{\overline{0}}$ is isomorphic to the classical Virasoro algebra with the center $\mathbb{C}c$. For $m\in \mathbb{Z}$, set
$$\mathcal{R}_m={\rm span}_\mathbb{C}\{L_m, G_m, \delta_{m, 0}c\}.$$
Then $\mathcal{R}$ is a $\mathbb{Z}$-graded algebra with grading determined by the eigenvalues of the adjoint action of $-L_0$,
and $\mathcal{R}$ has the following triangular decomposition
$$\mathcal{R}=\mathcal{R}_{+}\oplus\mathcal{R}_{0}\oplus\mathcal{R}_{-},$$
where
$$\mathcal{R}_{\pm}={\rm span}_{\mathbb{C}}\{L_{\pm m}, G_{\pm m}\ |\ m\in\mathbb{Z}_+\}\ \text{and}\ \mathcal{R}_{0}
={\rm span}_{\mathbb{C}}\{L_0, G_0, c\}.$$
Moreover, for $m, n\in\mathbb{Z}$ we define
$$\mathcal{R}^{(m,n)}:={\bigoplus\limits_{i\geq m}{\mathbb{C}L_i}}
\oplus {\bigoplus\limits_{j\geq n}{\mathbb{C}G_j}} \oplus {\mathbb{C}c}.$$

An $\mathcal{R}$-module is a $\mathbb{Z}_2$-graded vector space $M$ with a bilinear map
$\mathcal{R}\times M\rightarrow M$; $(x, v)\mapsto xv$, satisfying
$$x(yv)-(-1)^{|x||y|}y(xv)=[x, y]v\ \text{and}\ \mathcal{R}_iM_j\subseteq M_{i+j}$$
for any $x, y\in\mathcal{R}$, $v\in M$ and $i, j\in\mathbb{Z}_2$.
Clearly there is a parity-change functor on the category of $\mathcal{R}$-modules
interchanging the $\mathbb{Z}_2$-grading of a module.
Denote by $U(\mathcal{R})$ the universal enveloping algebra of $\mathcal{R}$.

\begin{Definition}
Let $M$ be an $\mathcal{R}$-module.
\begin{enumerate}
  \item[\rm (1)] The action of $x\in\mathcal{R}$ on $M$ is called locally nilpotent if for any $v\in M$,
  there exists $n\in\mathbb{Z}_+$ such that $x^nv=0$.
  The action of $\mathcal{R}$ on $M$ is locally nilpotent if for any $v\in M$,
  there exists $n\in\mathbb{Z}_+$ such that $\mathcal{R}^nv=0$.
  \item[\rm (2)] The action of $x\in\mathcal{R}$ on $M$ is called locally finite if
  ${\rm dim}\left(\sum_{n\in\mathbb{Z}_+}\mathbb{C}x^nv\right)$ $<+\infty$ for any $v\in M$.
      The action of $\mathcal{R}$ on $M$ is locally finite if
      ${\rm dim}\left(\sum_{n\in\mathbb{Z}_+}\mathcal{R}^nv\right)<+\infty$ for any $v\in M$.
\end{enumerate}
\end{Definition}

Obviously, for any Lie (super)algebra $\mathfrak{L}$, if the action of $x\in\mathfrak{L}$ on $M$ is locally nilpotent, then the action of $x$ is locally finite. Also if $\mathfrak{L}$  is locally nilpotent on $M$, then   the action of  $\mathfrak{L}$ is locally finite on $M$ provided  $\mathfrak{L}$ is finitely generated.

\begin{Definition}
    An $\mathcal{R}$-module $M$ is called a smooth module if for any ${v}\in M$, there exists $n\in\mathbb{N}$ such that  $\mathcal{R}_mv=0$ for all $m>n$.
\end{Definition}

\begin{Definition}
We say that an $\mathcal{R}$-module $M$ is of central charge $l$, if $c$ acts on $M$ as a complex scalar $l$.
\end{Definition}

Denote by
$$\mathfrak{b}:=\bigoplus_{m\geqslant0}(\mathbb{C}L_m\oplus\mathbb{C}G_m)\oplus\mathbb{C}c,$$
which is a subalgebra of $\mathcal{R}$. Given a $\mathfrak{b}$-module $V$,
we consider the induced module
$${\rm Ind}(V):={\rm Ind}_{\mathfrak{b}}^{\mathcal{R}}V=U(\mathcal{R})\otimes_{U(\mathfrak{b})}V.$$
If $V$ is a simple $\mathfrak{b}$-module, then $c$ acts on $V$ as a scalar $l\in\mathbb{C}$, and ${\rm Ind}(V)$ has central charge $l$.

For the infinite vectors of the form ${\bf i}:=(\cdots,\ i_2,\ i_1)$, denote by
$$\mathbb{M}:=\{{\bf i}=(\cdots,\ i_2,\ i_1)\mid i_k\in\mathbb{N}\ \text{and the number of nonzero entries is finite}\}$$
and
$$\mathbb{M}_1:=\{{\bf i}\in\mathbb{M}\mid i_k=0, 1\text{ for } k\in\mathbb{Z}_+\}.$$
Let ${\bf 0}=(\cdots,\ 0,\ 0)\in\mathbb{M}$ and $\varepsilon_k=(\cdots,\ 0,\ 1,\ 0,\ \cdots,\ 0)$ for $k\in\mathbb{Z}_+$, where $1$ is in the $k$-th position from the right. For ${\bf i}\neq{\bf 0}$, denote by $\hat{i}$ the minimal integer $k$ such that $i_k\neq0$ and define ${\bf i'}:={\bf i}-\varepsilon_{\hat{i}}$.
%Define ${\bf i'}={\bf i}-\varepsilon_k$ for ${\bf i}\neq{\bf 0}$ if $k$ is the minimal integer such that $i_k\neq0$, and denote such $k$ by $\hat{i}$.

For ${\bf k}\in\mathbb{M}_1, {\bf i}\in\mathbb{M}$, let
$$G^{\bf k}L^{\bf i}=\cdots G^{k_2}_{-2}G^{k_1}_{-1}\cdots L^{i_2}_{-2}L^{i_1}_{-1}\in U(\mathcal{R}_-)$$
and
$$w({\bf k},{\bf i})=\sum\limits_{n\in\mathbb{Z}_+}n(k_n+i_n),$$
called the weight of $({\bf k},{\bf i})$. Denote by
$$U(\mathcal{R}_-)_{-m}:=\{G^{\bf k}L^{\bf i}|w({\bf k},{\bf i})=m\},\ m\in\mathbb{Z}_+.$$
According to the PBW Theorem, any vector ${v}\in{\rm Ind}_l(V)$ can be written in the form $$v=\sum\limits_{{\bf k}\in\mathbb{M}_1, {\bf i}\in\mathbb{M}}G^{\bf k}L^{\bf i}v_{{\bf k},{\bf i}},$$
where $v_{{\bf k},{\bf i}}\in V$ and only finitely many of them are nonzero. Then we define the support set of $v$
$${\rm supp}(v):=\{({\bf k},{\bf i})\in\mathbb{M}_1\times\mathbb{M}\mid v_{{\bf k},{\bf i}}\neq0\},$$
and clearly it is finite.

We need the following   total orders.
\begin{Definition}
Denote by $<$ the reverse lexicographic total order on $\mathbb{M}$ (or $\mathbb{M}_1$), i.e., for  any ${\bf i, j}\in\mathbb{M}$ (or in  $\mathbb{M}_1$), we say ${\bf i}<{\bf j}$ if there is $k\in \mathbb{Z}_+$ such that $i_k<j_k$ and $i_s=j_s$ for all $s<k$.
%\begin{enumerate}
%  \item[(1)] $\hat{i}>\hat{j}$;
%  \item[(2)] $\hat{i}=\hat{j}$ and ${\bf i'}<{\bf j'}$.
%\end{enumerate}
\end{Definition}

\begin{Definition}
Denote by $\prec$ the principle total order on $\mathbb{M}_1\times\mathbb{M}$, i.e., for ${\bf k, m}\in\mathbb{M}_1$ and ${\bf i, j}\in\mathbb{M}$, set $({\bf k},{\bf i})\prec({\bf m}, {\bf j})$ if  one of following is satisfied:
\begin{enumerate}
  \item[\rm(1)] $w({\bf k},{\bf i})<w({\bf m}, {\bf j})$;
  \item[\rm(2)] $w({\bf k},{\bf i})=w({\bf m}, {\bf j})$ and ${\bf k}<{\bf m}$;
  \item[\rm(3)] $w({\bf k},{\bf i})=w({\bf m}, {\bf j})$, ${\bf k}={\bf m}$ and ${\bf i}<{\bf j}$.
\end{enumerate}
\end{Definition}

With respect to the principle total order on $\mathbb{M}_1\times\mathbb{M}$, denote by
$${\rm deg}(v):={\rm max}\{({\bf k},{\bf i})\mid ({\bf k},{\bf i})\in{\rm supp}(v)\}$$
for $v\in {\rm Ind}_l(V)$, called the degree of $v$.

\section{Construction of simple $\mathcal{R}$-modules}\label{Sec Construction}
In this section, we construct some simple smooth $\mathcal{R}$-modules including the ones in \cite{C2023}.
\begin{Lemma}\label{Lem G_jV}
Let $V$ be a $\mathfrak{b}$-module and assume that there exists ${t}\in\mathbb{Z}_+$ such that
\begin{enumerate}
	\item[\rm(1)] the action of $L_t$ on $V$ is injective;
	\item[\rm(2)] $L_iV=0$ for all $i>t$.
\end{enumerate}
Then $G_jV=0$ for all $j>t$.

\iffalse
\begin{equation}\label{condi}
\begin{aligned}
&\text{{\rm (1)} the action of } L_t \text{ on } V \text{ is injective}; \\
&\text{{\rm (2)} }L_iV=0 \text{ for all } i>t.
\end{aligned}
\end{equation}
\fi

\end{Lemma}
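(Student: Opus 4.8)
The plan is to exploit the super-commutator $[G_m,G_n]=2L_{m+n}+\frac13\delta_{m+n,0}(m^2-\frac14)c$, which lets me recover an $L$-action from the $G$-action, together with the bracket $[L_m,G_n]=(\frac m2-n)G_{m+n}$, which propagates vanishing of $G_j$ upward in the index $j$. First I would record the elementary observation that, since $G_j$ is odd, the super-bracket gives $G_j^2=\tfrac12[G_j,G_j]=L_{2j}$ for every $j\in\mathbb{Z}$ (the central term vanishes for $j\neq 0$, and we will only need $j$ large). In particular, for any $j$ with $2j>t$, hypothesis (2) gives $L_{2j}V=0$, hence $G_j^2V=0$; but this only kills even-index squares and is not yet enough, so the main work is to bootstrap down to all $j>t$.

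The key step is a downward induction argument that I would organize as follows. Fix $v\in V$ and let $j>t$; I want $G_jv=0$. Consider $w:=G_jv$. Applying $L_t$ (which is injective on $V$ by (1)), it suffices to show $L_t w=0$, i.e. $L_tG_jv=0$. Now $L_tG_jv=G_jL_tv+[L_t,G_j]v=G_jL_tv+(\tfrac t2-j)G_{t+j}v$. Since $t+j>t$, if we already knew $G_{t+j}v=0$ we would be reduced to showing $G_jL_tv=0$; and $L_tv\in V$ again, so this is the same statement for a different vector. This suggests setting up the induction on a quantity that decreases — for instance, fixing $v$ and inducting downward on $j$ starting from values of $j$ large enough that $G_jv=0$ is already known. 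To seed the induction I would use: for $j$ sufficiently large (depending on $v$, via smoothness — but $V$ here is only a $\mathfrak b$-module, so instead) I use that $U(\mathfrak b)v$ is where the action lives and argue that the set $\{j>t:G_jv\neq0\}$, if nonempty, has a maximal element $j_0$ (this needs a grading/finiteness input). Then apply the displayed identity with $j=j_0$: the term $(\tfrac t2-j_0)G_{t+j_0}v$ vanishes by maximality, so $L_tG_{j_0}v=G_{j_0}L_tv$. Iterating, $L_t^nG_{j_0}v=G_{j_0}L_t^nv$, and since $L_t^nv$ eventually involves arbitrarily high... — here I would instead pick $n$ with $nt+j_0$ large and use $L_i V=0$ for $i>t$ to collapse $G_{j_0}L_t^nv$, contradicting injectivity of $L_t$ unless $G_{j_0}v=0$.

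The step I expect to be the genuine obstacle is making the "maximal $j$" argument rigorous, because a general $\mathfrak b$-module need not be graded, so there may be no a priori bound on $\{j:G_jv\neq0\}$. The clean fix is probably to first prove the statement on the induced module or to use a weight/filtration argument: since $\mathfrak b$ is $\mathbb Z_{\ge0}$-graded by $-L_0$-eigenvalue and $G_j$ lowers... — more robustly, I would replace the naive maximality with the square identity $G_j^2v=L_{2j}v=0$ for $2j>t$, which forces $G_j V$ to consist of elements annihilated by $G_j$; combined with $[G_j,G_{j'}]=2L_{j+j'}$ acting as $0$ whenever $j+j'>t$, the span $\sum_{j>t}G_jV$ becomes an abelian-ish object on which $L_t$ acts injectively yet locally nilpotently (via $L_t = G_{?}G_{?}$ type relations or via $[L_t,G_j]$ pushing indices up), and the only way to reconcile injectivity with this structure is that the whole space is zero. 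I would present the final write-up by first establishing $G_jV=0$ for all sufficiently large $j$ using the square trick, then descending to $j=t+1,\dots$ by the one-line commutator identity $L_tG_jv=G_jL_tv+(\tfrac t2-j)G_{t+j}v$ and injectivity of $L_t$.
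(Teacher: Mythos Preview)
Your proposal has a genuine gap and misses the elementary one-line argument the paper uses.

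The paper's proof does not use hypothesis (1) at all. It simply observes that for $j>t\ge 1$ the commutator relation $[L_j,G_0]=\tfrac{j}{2}G_j$ gives
\[
G_jV=\tfrac{2}{j}[L_j,G_0]V=\tfrac{2}{j}(L_jG_0-G_0L_j)V=0,
\]
since $L_jV=0$ by (2) and $G_0V\subseteq V$. That is the entire proof. You never tried the bracket $[L_j,G_0]$ with the \emph{same} index $j$; all of your identities involve $L_t$ or $G_j^2$, which is why you had to build an elaborate induction.

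As for your approach on its own terms: it does not close. The ``square trick'' $G_j^2=L_{2j}$ only yields $G_j^2V=0$, not $G_jV=0$, so it provides no base case. Your downward induction step, assuming $G_{j'}V=0$ for all $j'>j$, gives $L_tG_jv=G_jL_tv$ for every $v\in V$; but this only says $G_j$ and $L_t$ commute on $V$, and injectivity of $L_t$ cannot by itself force $G_jV=0$ from that. You recognized the missing finiteness input (``maximal $j$'' fails for a general $\mathfrak b$-module), but the proposed fix via ``$L_t$ acts injectively yet locally nilpotently'' is not justified: nothing in the hypotheses makes $L_t$ locally nilpotent on $\sum_{j>t}G_jV$. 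So the descent never gets started.
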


\begin{proof}
	Since
	\begin{equation*}
		G_{j}V=\frac{2}{j}[L_{j}, G_0]V=\frac{2}{j}(L_{j}G_0-G_0L_{j})V=0
	\end{equation*}
	for any $j>t\geq 1$, the assertion follows.
\end{proof}

\begin{Lemma}\label{Lem (k,i)}
Assume that $V$ is a $\mathfrak{b}$-module (not necessarily simple) satisfying the conditions in Lemma \ref{Lem G_jV}. If $W$ is a nonzero submodule of ${\rm Ind}(V)$, then $W\cap V\neq\{0\}$.
\end{Lemma}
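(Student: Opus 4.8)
The plan is to take a nonzero element $v \in W$ and show that by repeatedly acting with suitable operators from $\mathcal{R}_+$ (raising operators with respect to the $\mathbb{Z}$-grading) we can strictly decrease its degree $\mathrm{deg}(v)$ in the principle order, while staying inside $W$, until we land in $V$. So the first step is to write $v = \sum_{({\bf k},{\bf i})} G^{\bf k}L^{\bf i} v_{{\bf k},{\bf i}}$ with finite support, let $({\bf m},{\bf j}) = \mathrm{deg}(v)$ be the maximal support element, and assume for contradiction that $({\bf m},{\bf j}) \neq ({\bf 0},{\bf 0})$, i.e. $v \notin V$.

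The key computation is to understand the effect of $L_n v$ and $G_n v$ for $n \in \mathbb{Z}_+$ on the leading term $G^{\bf m}L^{\bf j} v_{{\bf m},{\bf j}}$. Using the commutation relations, $L_n G^{\bf m}L^{\bf j}$ pushes $L_n$ to the right through the product $\cdots L_{-2}^{j_2} L_{-1}^{j_1}$; since $L_i V = 0$ for $i > t$ and $G_i V = 0$ for $i > t$ (Lemma \ref{Lem G_jV}), the terms where $L_n$ reaches $V$ after having been raised to a sufficiently positive index vanish, and the surviving terms of top weight are obtained by commuting $L_n$ with exactly one factor. Concretely, if $\hat{j}$ denotes the smallest index with $j_{\hat j} \neq 0$, then acting with $L_{\hat j}$ (or, handling the odd part, with $G_{\hat j}$ or $G_{\hat m}$ where $\hat m$ is the smallest index with $m_{\hat m} \neq 0$) on $v$ produces a new element of $W$ whose leading term is $G^{\bf m}L^{{\bf j}'}(\text{scalar})\,v_{{\bf m},{\bf j}}$ plus lower-order terms, where ${\bf j}' = {\bf j} - \varepsilon_{\hat j}$; the crucial point is that the scalar is nonzero (it involves the central term $\frac{n^3-n}{12}l$ or the structure constants $(m-n)$, $(m/2-n)$, which one arranges to be nonzero by choosing $n = \hat j$ appropriately — one may need to be slightly careful when $\hat j$ is small, e.g. $\hat j \in \{1,2\}$, and use $G_0$-type tricks or the $[G,G]$ relation there). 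Iterating, one strictly lowers the degree step by step: first emptying out ${\bf j}$, then ${\bf m}$, using that $G_n$ for suitable $n$ reduces the $G$-part. Since the principle order is a well-order on support sets of bounded weight (each step decreases weight or, at equal weight, decreases ${\bf k}$ or ${\bf i}$ in the reverse-lex order, and there are only finitely many states below a given weight), after finitely many steps we obtain a nonzero element of $W$ lying in $V$, i.e. $W \cap V \neq \{0\}$.

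The main obstacle I expect is the bookkeeping needed to guarantee that the leading coefficient does not vanish at each reduction step and that no unexpected cancellation occurs among the several top-weight terms produced when $L_n$ (or $G_n$) is commuted past the various factors of $G^{\bf m}L^{\bf j}$. This is exactly the kind of place where the injectivity hypothesis on $L_t$ (condition (1) of Lemma \ref{Lem G_jV}) is essential: it ensures that $L_t v_{{\bf m},{\bf j}} \neq 0$, so that even after the action the relevant coefficient in $V$ is still nonzero and the new element of $W$ is genuinely nonzero rather than collapsing. Setting up a clean induction on $\mathrm{deg}(v)$ with respect to $\prec$, and isolating a single "lowering lemma" that says: given $v \in \mathrm{Ind}_l(V)$ with $\mathrm{deg}(v) = ({\bf k},{\bf i}) \succ ({\bf 0},{\bf 0})$, there is $X \in \mathcal{R}_+$ with $Xv \neq 0$ and $\mathrm{deg}(Xv) \prec \mathrm{deg}(v)$ — will make the argument go through; the parity/sign factors from the superalgebra structure are routine once the even case is in hand.
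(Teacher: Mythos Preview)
Your overall strategy---take $v\in W$ of minimal degree and find an element of $\mathcal{R}_+$ whose action strictly lowers $\deg(v)$---is exactly the paper's. But the specific operators you propose, $L_{\hat\jmath}$ and $G_{\hat m}$, do not work, and this is precisely the ``bookkeeping obstacle'' you flag. If you commute $L_{\hat\jmath}$ past $L_{-\hat\jmath}$ you produce $2\hat\jmath\,L_0+\text{(central)}$ acting on $v_{{\bf m},{\bf j}}$, and nothing in the hypotheses tells you $L_0 v_{{\bf m},{\bf j}}\ne 0$; likewise $[G_{\hat m},G_{-\hat m}]=2L_0+\text{(central)}$. So your invocation of the injectivity of $L_t$ is disconnected from the operators you actually apply, and the leading term can vanish.

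The paper's fix is to shift by $t$: act with $G_{\hat k+t}$ when ${\bf k}\ne{\bf 0}$ and with $L_{\hat\imath+t}$ when ${\bf k}={\bf 0}$. Then $[G_{\hat k+t},G_{-\hat k}]=2L_t$ and $[L_{\hat\imath+t},L_{-\hat\imath}]=(2\hat\imath+t)L_t$, so the top-weight contribution lands in $U(\mathcal{R}_-)L_t v_{{\bf k},{\bf i}}$, which is nonzero exactly by the injectivity hypothesis. There is one further wrinkle you are missing: at the same top weight you also pick up terms in $U(\mathcal{R}_-)G_t v_{{\bf m},{\bf j}}$ (since $G_t$ need not kill $V$), and these could interfere with identifying the new degree. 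The paper handles this with a preprocessing step: first apply $G_t$ to $v$. Because $G_t^2=L_{2t}$ annihilates $V$, either $G_t$ already kills all top-weight coefficients, or $G_t v$ is a nonzero element of $W$ with the same degree whose top-weight coefficients \emph{are} killed by $G_t$. After this reduction, the analysis of $G_{\hat k+t}v$ and $L_{\hat\imath+t}v$ goes through cleanly and yields a nonzero element of strictly smaller degree, contradicting minimality.
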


\begin{proof}
To the contrary, suppose that $W\cap V=\{0\}$. Take a nonzero vector $v\in W$ such that ${\rm deg}(v)=({\bf k}, {\bf i})$ is minimal.
By the PBW Theorem, the vector $v$ can be written in the form
$$v=\sum\limits_{({\bf m}, {\bf j})\in{\rm supp}(v)}G^{\bf m}L^{\bf j}v_{{\bf m}, {\bf j}},$$
where $v_{{\bf m}, {\bf j}}\in V$.
Since $W$ is a submodule, we have
\begin{equation*}
  G_tv=\sum\limits_{w({\bf m}, {\bf j})=w({\bf k}, {\bf i})}G^{\bf m}L^{\bf j}G_tv_{{\bf m}, {\bf j}}+\sum\limits_{w({\bf \tilde{m}}, {\bf \tilde{j}})<w({\bf k}, {\bf i})}G^{\bf \tilde{m}}L^{\bf \tilde{j}}v_{{\bf \tilde{m}}, {\bf \tilde{j}}} \in W.
\end{equation*}
If $\sum\limits_{w({\bf m}, {\bf j})=w({\bf k}, {\bf i})}G^{\bf m}L^{\bf j}G_tv_{{\bf m}, {\bf j}}=0$, we obtain $G_tv_{{\bf m}, {\bf j}}=0$ since $G^{\bf m}L^{\bf j}$ are linearly independent for different $({\bf m}, {\bf j})$ according to the PBW Theorem.
Otherwise, $0\neq G_tv\in W$. Since $G_t^2V=0$, we can replace $v$ with $G_tv$ and they share the same degree. Thus we may assume  that $G_tv_{{\bf m}, {\bf j}}=0$ for all $({\bf m}, {\bf j})\in{\rm supp}(v)$ with $w({\bf m}, {\bf j})=w({\bf k}, {\bf i})$.

Let $w({\bf m}, {\bf j})=q\in \mathbb{Z}_+$ for any fixed nonzero $({\bf m}, {\bf j})\in {\rm supp}(v)$. In order to obtain a contradiction, we shall construct a nonzero vector in $W$ with degree lower than ${\rm deg}(v)$ by the following two claims.

%{\bf Case 1: ${\bf k}\neq {\bf 0}$.}
{\bf Claim 1.} If ${\bf k}\neq {\bf 0}$, then ${\rm deg}(G_{\hat{k}+t}v)=({\bf k', i})$.

Indeed, note $L_tv_{{\bf m}, {\bf j}}\neq0\ {\rm and}\ G_{\hat{k}+t}v_{{\bf m}, {\bf j}}=0$ due to the Lemma \ref{Lem G_jV}. Thus we have
\begin{equation*}
G_{\hat{k}+t}G^{\bf m}L^{\bf j}v_{{\bf m}, {\bf j}} =[G_{\hat{k}+t},G^{\bf m}]L^{\bf j}v_{{\bf m}, {\bf j}}+G^{\bf m}[G_{\hat{k}+t}, L^{\bf j}]v_{{\bf m}, {\bf j}}.
\end{equation*}
If $\hat{k}+t-q>t$, i.e. $\hat{k}>q$, we see that $G_{\hat{k}+t}G^{\bf m}L^{\bf j}v_{{\bf m}, {\bf j}}=0.$
Next we consider the case  $\hat{k}\leq q\leq w({\bf k}, {\bf i})$. After transferring the factors in $\mathcal{R}_0\oplus\mathcal{R}_+$ to the right side in each term by the commutation relations in Definition \ref{Def R}, the equation above leads to
\begin{equation}\label{equ G GLv}
\begin{aligned}
G_{\hat{k}+t}G^{\bf m}L^{\bf j}v_{{\bf m}, {\bf j}}
\in&\sum\limits_{n=0}^{\hat{k}+t}U(\mathcal{R}_-)_{-q+(\hat{k}+t)-n}\mathcal{R}_{n}v_{{\bf m}, {\bf j}}\\
&=\left(U(\mathcal{R}_-)_{-q+\hat{k}+t}(\mathcal{R}_0+\mathbb{C})
+\cdots+U(\mathcal{R}_-)_{-q+\hat{k}}\mathcal{R}_t\right)v_{{\bf m}, {\bf j}}\\
&=U(\mathcal{R}_-)_{-q+\hat{k}}L_tv_{{\bf {m}}, {\bf {j}}}
+U(\mathcal{R}_-)_{-q+\hat{k}}G_tv_{{\bf {m}}, {\bf {j}}}\\
&\ \ \ +\text{the terms with lower weight}.
\end{aligned}
\end{equation}
If $G_{\hat{k}+t}G^{\bf m}L^{\bf j}v_{{\bf m}, {\bf j}}\neq 0$,  denote ${\rm deg}(G_{\hat{k}+t}G^{\bf m}L^{\bf j}v_{{\bf m}, {\bf j}})=({\bf m_1}, {\bf j_1})\in\mathbb{M}_1\times\mathbb{M}$. Then $w({\bf m_1}, {\bf j_1})\leq q-\hat{k}$  which directly follows from (\ref{equ G GLv}).

{\bf Case 1.1: $w({\bf m}, {\bf j})<w({\bf k}, {\bf i})$.}

We deduce that
$$w({\bf m_1}, {\bf j_1})\leq q-\hat{k}<w({\bf k}, {\bf i})-\hat{k}=w({\bf k'}, {\bf i}).$$
Thus $({\bf m_1}, {\bf j_1})\prec({\bf k'}, {\bf i})$.

{\bf Case 1.2: $w({\bf m}, {\bf j})=w({\bf k}, {\bf i})$ and ${\bf m}<{\bf k}$.}

Note that $G_tv_{{\bf m}, {\bf j}}=0$ in this case. It follows from ${\bf m}<{\bf k}$ that ${\bf m}={\bf 0}$ or $\hat{m}\geq\hat{k}>0$.
We see that $L_t$ occurs in (\ref{equ G GLv}) if and only if $m_{\hat{k}}\neq 0$.

If ${\bf m}={\bf 0}$ or $\hat{m}>\hat{k}>0$, then the first two summands in (\ref{equ G GLv}) vanish. Thus we obtain
$$w({\bf m_1}, {\bf j_1})< q-\hat{k}=w({\bf k}, {\bf i})-\hat{k}=w({\bf k'}, {\bf i}),$$
yielding $({\bf m_1}, {\bf j_1})\prec({\bf k'}, {\bf i})$.

If $\hat{m}=\hat{k}>0$, then ${\bf m_1}={\bf m'}<{\bf k'}$ since $L_{t}$ occurs only in the bracket $[G_{\hat{k}+t}, G_{-\hat{k}}]$. And we know
$$w({\bf m_1}, {\bf j_1})=q-\hat{m}=w({\bf k}, {\bf i})-\hat{k}=w({\bf k'}, {\bf i}).$$
Hence, $({\bf m_1}, {\bf j_1})= ({\bf m'}, {\bf j_1})\prec({\bf k'}, {\bf i})$.

{\bf Case 1.3. $w({\bf m}, {\bf j})=w({\bf k}, {\bf i})$ and ${\bf m}={\bf k}$.}

Similar arguments to above yield
$$w({\bf m_1}, {\bf j_1})=w({\bf k'}, {\bf i})=q-\hat{k}\geq0\ {\text{and}}\ {\bf m_1}={\bf m'}={\bf k'}.$$
If ${\bf j}<{\bf i}$, then ${\bf j_1}={\bf j}<{\bf i}$, which reveals that $({\bf m_1}, {\bf j_1})=({\bf m'}, {\bf j})\prec({\bf k'}, {\bf i})$.

In the following we show that $G_{\hat{k}+t}v\neq 0$.
If $w({\bf k}, {\bf i})=q=\hat{k}$, then $\bf j=\bf i=\bf 0$, so we have
$$G_{\hat{k}+t}G^{\bf m}L^{\bf j}v_{{\bf m}, {\bf j}}=G_{\hat{k}+t}G_{-\hat{k}}v_{{\bf m}, {\bf 0}}=2L_tv_{{\bf m}, {\bf 0}},$$
which is a nonzero element. This together with Cases 1.1 and 1.2 yields that  $G_{\hat{k}+t}v\neq 0$, and ${\rm deg}(G_{\hat{k}+t}v)=(\bf{0}, \bf{0})=({\bf k'}, {\bf i})$. If $w({\bf k}, {\bf i})=q>\hat{k}$, then $w({\bf m_1}, {\bf j_1})=w({\bf k'}, {\bf i})>0$, which implies $G_{\hat{k}+t}G^{\bf m}L^{\bf j}v_{{\bf m}, {\bf j}}=2G^{\bf m^{\prime}}L^{\bf j}L_tv_{{\bf m,j}} +\text{the terms with lower weight}\neq 0$ for all $(\bf m, j)$ with $w({\bf m}, {\bf j})=w({\bf k}, {\bf i})$ and ${\bf m}={\bf k}$. In particular, $G_{\hat{k}+t}G^{\bf k}L^{\bf i}v_{{\bf k}, {\bf i}}\neq 0$, so that $G_{\hat{k}+t}v\neq 0$, and ${\rm deg}(G_{\hat{k}+t}v)={\rm deg}(G_{\hat{k}+t}G^{\bf k}L^{\bf i}v_{{\bf k}, {\bf i}})=({\bf k'}, {\bf i})$.

Thus we conclude that $G_{\hat{k}+t}v\neq 0$, and
$${\rm deg}(G_{\hat{k}+t}G^{\bf m}L^{\bf j}v_{{\bf m}, {\bf j}})=({\bf k'}, {\bf i})$$
if and only if $({\bf m}, {\bf j})=({\bf k}, {\bf i})$. Claim 1 follows.

{\bf Claim 2.} If ${\bf k}={\bf 0}$, then ${\bf i}\neq {\bf 0}$ and ${\rm deg}(L_{\hat{i}+t}v)=({\bf 0, i'})$.
%{\bf Case 2. ${\bf k}={\bf 0}$ and ${\bf i}\neq {\bf 0}$.}

Indeed, since $v\notin V$, clearly we see that ${\bf i}\neq {\bf 0}$.
We write $v$ as
$$v=L^{\bf i}v_{{\bf 0}, {\bf i}}+\sum\limits_{({\bf m}, {\bf j})\prec({\bf 0}, {\bf i})}G^{\bf m}L^{\bf j}v_{{\bf m}, {\bf j}}.$$
If $\hat{i}+t-q>t$, i.e. $\hat{i}>q$, we see that $L_{\hat{i}+t}G^{\bf m}L^{\bf j}v_{{\bf m}, {\bf j}}=0.$  Next we consider the case  $\hat{i}\leq q\leq w({\bf 0}, {\bf i})$.
%Note $L_{\hat{i}+t}v_{{\bf m}, {\bf j}}=0$ for any $({\bf m}, {\bf j})\in{\rm %supp}(v)$. Fix a nonzero $({\bf m}, {\bf j})\in{\rm supp}(v)$,
We know that
$$L_{\hat{i}+t}G^{\bf m}L^{\bf j}v_{{\bf m}, {\bf j}}
=[L_{\hat{i}+t}, G^{\bf m}]L^{\bf j}v_{{\bf m}, {\bf j}}+G^{\bf m}[L_{\hat{i}+t},L^{\bf j}]v_{{\bf m}, {\bf j}}.$$
Then similar to (\ref{equ G GLv}), the equation above leads to
\begin{equation}\label{equ L GLv}
\begin{aligned}
L_{\hat{i}+t}G^{\bf m}L^{\bf j}v_{{\bf m}, {\bf j}}
\in&\sum\limits_{n=0}^{t}U(\mathcal{R}_-)_{-q+(\hat{i}+t)-n}\mathcal{R}_{n}v_{{\bf m}, {\bf j}}\\
&=U(\mathcal{R}_-)_{-q+\hat{i}}L_tv_{{\bf m}, {\bf j}}
+U(\mathcal{R}_-)_{-q+\hat{i}}G_tv_{{\bf m}, {\bf j}}\\
&\ \ \ +\text{the terms with lower weight}.\\
\end{aligned}
\end{equation}
If $L_{\hat{i}+t}G^{\bf m}L^{\bf j}v_{{\bf m}, {\bf j}}\neq 0$, denote ${\rm deg}(L_{\hat{i}+t}G^{\bf m}L^{\bf j}v_{{\bf m}, {\bf j}})=({\bf m_2}, {\bf j_2})\in\mathbb{M}_1\times\mathbb{M}$. Then $w({\bf m_2}, {\bf j_2})\leq q-\hat{i}$ by (\ref{equ L GLv}).

If ${\bf m\neq 0}$, we see that $w({\bf m}, {\bf j})<w({\bf 0}, {\bf i})$, yielding $({\bf m_2}, {\bf j_2})\prec({\bf 0}, {\bf i'})$ similar to Case 1.1. So we only need to consider the case ${\bf m=0}$. Note that there are no odd elements in $[L_{\hat{i}+t},L^{\bf j}]v_{{\bf 0}, {\bf j}}$, which implies that ${\bf m_2=0}$.

{\bf Case 2.1: $w({\bf 0}, {\bf j})<w({\bf 0}, {\bf i})$.}

Similar to Case 1.1, we obtain $({\bf 0}, {\bf j_2})\prec({\bf 0}, {\bf i'})$ since
$$w({\bf 0}, {\bf j_2})\leq q-\hat{i}<w({\bf 0}, {\bf i})-\hat{i}=w({\bf 0}, {\bf i'}).$$

{\bf Case 2.2: $w({\bf 0}, {\bf j})=w({\bf 0}, {\bf i})$ and ${\bf j}<{\bf i}$.}

It follows from ${\bf j}<{\bf i}$ that ${\bf j}={\bf 0}$ or $\hat{j}\geq\hat{i}>0$.
Note that there is no $G_t$ in this case, and $L_t$ occurs in (\ref{equ L GLv}) if and only if $j_{\hat{i}}\neq 0$.

If ${\bf j}={\bf 0}$ or $\hat{j}>\hat{i}>0$, then the first two summands in (\ref{equ L GLv}) vanish. Thus
$$w({\bf 0}, {\bf j_2})< q-\hat{i}=w({\bf 0}, {\bf i})-\hat{i}=w({\bf 0}, {\bf i'}),$$
yielding $({\bf 0}, {\bf j_2})\prec({\bf 0}, {\bf i'})$.

If $\hat{j}=\hat{i}>0$, then ${\bf j_2}={\bf j'}<{\bf i'}$ since $L_{t}$ occurs only in the bracket $[L_{\hat{i}+t}, L_{-\hat{i}}]$. And we know
$$w({\bf 0}, {\bf j_2})=q-\hat{j}=w({\bf 0}, {\bf i})-\hat{i}=w({\bf 0}, {\bf i'}).$$
Hence, $({\bf m_2}, {\bf j_2})= ({\bf 0}, {\bf j'})\prec({\bf 0}, {\bf i'})$.

{\bf Case 2.3: $({\bf 0}, {\bf j})=({\bf 0}, {\bf i})$.}

Similar arguments to above yield
$$w({\bf 0}, {\bf j_2})=w({\bf 0}, {\bf i'})=q-\hat{i}\geq0\ {\text{and}}\ {\bf j_2}={\bf j'}={\bf i'}.$$
We remain to show $L_{\hat{i}+t}v\neq 0$.
If $w({\bf 0}, {\bf i})=q=\hat{i}$, then we have
$$L_{\hat{i}+t}L^{\bf i}v_{{\bf 0}, {\bf i}}=L_{\hat{i}+t}L_{-\hat{i}}v_{{\bf 0}, {\bf i}}=(2\hat{i}+t)L_tv_{{\bf 0}, {\bf i}},$$
which is a nonzero element. If $w({\bf 0}, {\bf i})=q>\hat{i}$, then $w({\bf0}, {\bf i'})>0$, which implies $L_{\hat{i}+t}L^{\bf i}v_{{\bf 0}, {\bf i}}\neq 0$. Thus $L_{\hat{i}+t}v\neq 0$.

Hence we conclude that $$L_{\hat{i}+t}v\neq 0\ {\rm and}\ {\rm deg}(L_{\hat{i}+t}L^{\bf j}v_{{\bf 0}, {\bf j}})\preceq({\bf 0}, {\bf i'}).$$
The equality holds if and only if $({\bf 0}, {\bf j})=({\bf 0}, {\bf i})$. Claim 2 follows.

Summing up, we can always get a nonzero vector in $W$ with degree lower than ${\rm deg}(v)$, which contradicts the choice of $v$.
\end{proof}

\begin{Theorem}\label{Th simple}
Let $V$ be a simple  $\mathfrak{b}$-module and assume that there exists ${t}\in\mathbb{Z}_+$ such that
	\begin{enumerate}
		\item[\rm(1)] the action of $L_t$ on $V$ is injective;
		\item[\rm(2)] $L_iV=0$ for all $i>t$.
	\end{enumerate}
Then the induced module ${\rm Ind}(V)$ is a simple smooth $\mathcal{R}$-module.
\end{Theorem}

\begin{proof} The result in this theorem directly follows from Lemma \ref{Lem (k,i)}.
\end{proof}

\section{Simple smooth $\mathcal{R}$-modules}\label{Sec Characterization}
In this section, we give a classification of all simple smooth $\mathcal{R}$-modules. As a first step, we need the following key result.

\begin{Theorem}\label{h w m}
Let $V$ be a simple $\mathcal{R}$-module. If there exists a nonzero vector $v\in V$ such that $\mathcal{R}_+v=0$, then $V$ is a highest weight $\mathcal{R}$-module.
\end{Theorem}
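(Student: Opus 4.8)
The plan is to reduce the statement to producing a single $L_0$-eigenvector that is annihilated by $\mathcal{R}_+$, and then to conclude via the universal property of the Verma (induced) modules of $\mathcal{R}$.

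\textbf{Step 1 (set-up).} Since $U(\mathcal{R})$ is countable-dimensional over $\mathbb{C}$ and $V$ is simple, a version of Schur's lemma forces the central element $c$ to act on $V$ as a scalar $l\in\mathbb{C}$. Put $W:=\{w\in V\mid \mathcal{R}_+w=0\}$, which is nonzero by hypothesis. From $[\mathcal{R}_0,\mathcal{R}_+]\subseteq\mathcal{R}_+$ the subspace $W$ is stable under $\mathcal{R}_0={\rm span}_{\mathbb{C}}\{L_0,G_0,c\}$, so $W$ is a module over $U(\mathcal{R}_0)/(c-l)$; by the PBW theorem together with the relation $G_0^2=L_0-\tfrac{l}{24}$, this algebra is isomorphic to the polynomial algebra $\mathbb{C}[G_0]$ (with $L_0$ acting as $G_0^2+\tfrac{l}{24}$), and thus $W$ becomes a $\mathbb{C}[G_0]$-module. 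Because $U(\mathcal{R})=U(\mathcal{R}_-)U(\mathcal{R}_0)U(\mathcal{R}_+)$ and $v\in W$, simplicity of $V$ gives $V=U(\mathcal{R}_-)\,\mathbb{C}[G_0]v$; moving the positive modes to the right also shows along the way that $V$ is automatically a smooth module.

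\textbf{Step 2 (an $L_0$-eigenvector killed by $\mathcal{R}_+$ --- the crux).} I claim $W$ contains a nonzero $v_0$ with $L_0v_0=hv_0$ for some $h\in\mathbb{C}$. It suffices to find a nonzero $w\in W$ whose cyclic $\mathbb{C}[G_0]$-submodule $\mathbb{C}[G_0]w$ is finite-dimensional, for then the commuting operator $L_0=G_0^2+\tfrac{l}{24}$ has an eigenvector in that finite-dimensional space, and that eigenvector lies in $W$. Hence the only case to exclude is that $W$ is a torsion-free $\mathbb{C}[G_0]$-module. Suppose it is; fix $0\neq w\in W$, so $\mathbb{C}[G_0]w\cong\mathbb{C}[G_0]$, and consider the generalized Verma module $\widetilde{M}:=\bigl(U(\mathcal{R})\otimes_{U(\mathfrak{b})}\mathbb{C}[G_0]w\bigr)/(c-l)$, which surjects onto $V$. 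Using the level $w({\bf k},{\bf i})\in\mathbb{N}$ to grade the spanning set $G^{\bf k}L^{\bf i}\,\mathbb{C}[G_0]w$ makes $\widetilde{M}$ a graded $\mathcal{R}$-module with top $\widetilde{M}^{[0]}=\mathbb{C}[G_0]w$, and for any nonconstant $f\in\mathbb{C}[G_0]$ the submodule $U(\mathcal{R})\bigl(f(G_0)w\bigr)=U(\mathcal{R}_-)\,f(G_0)\mathbb{C}[G_0]w$ is proper in $\widetilde{M}$ and meets $\widetilde{M}^{[0]}$ in $f(G_0)\mathbb{C}[G_0]w\subsetneq\widetilde{M}^{[0]}$. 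If the maximal submodule $N$ with $V\cong\widetilde{M}/N$ is graded, then either $N\cap\widetilde{M}^{[0]}\neq 0$, which makes the image of $\mathbb{C}[G_0]w$ in $V$ a finite-dimensional torsion $\mathbb{C}[G_0]$-module and contradicts torsion-freeness, or $N\cap\widetilde{M}^{[0]}=0$, in which case $V$ is graded with infinite-dimensional torsion-free level-zero part and $U(\mathcal{R}_-)f(G_0)\mathbb{C}[G_0]w$ is a proper nonzero submodule of $V$, contradicting simplicity. The delicate point --- and the one I expect to be the main obstacle --- is that $N$ need not be graded, since $L_0$ does not act semisimply on the torsion-free top $\widetilde{M}^{[0]}$; handling this is exactly where one invokes the irreducibility of the genuine Verma modules ${\rm Ind}_l(T)$ of $\mathcal{R}$ (obtained by collapsing $\widetilde{M}^{[0]}$ onto a finite-dimensional simple $\mathfrak{b}$-module) in order to compare $V$ with those modules and rule out a torsion-free top.

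\textbf{Step 3 (conclusion via Verma modules).} Given $v_0\in W$ with $L_0v_0=hv_0$, $cv_0=lv_0$ and $\mathcal{R}_+v_0=0$, examine $G_0v_0\in W$; it satisfies $G_0^2v_0=(h-\tfrac{l}{24})v_0$. If $h=\tfrac{l}{24}$ and $G_0v_0\neq 0$, replace $v_0$ by $G_0v_0$, which is then additionally killed by $G_0$. In every case this yields a $\mathbb{Z}_2$-graded $\mathfrak{b}$-submodule $T$ of $V$ of dimension $1$ or $2$ on which $\mathcal{R}_+$ acts by $0$, $c$ by $l$ and $L_0$ by $h$, and which is simple as a graded $\mathfrak{b}$-module; moreover $V=U(\mathcal{R})T$ by simplicity. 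The inclusion $T\hookrightarrow V$ is a homomorphism of $\mathfrak{b}$-modules, so since $c$ acts as $l$ on $V$ it extends, by the universal property of induction, to a surjective $\mathcal{R}$-module homomorphism ${\rm Ind}_l(T)\twoheadrightarrow V$. Hence $V$ is a quotient of the Verma module ${\rm Ind}_l(T)$, i.e.\ $V$ is a highest weight $\mathcal{R}$-module, as claimed.
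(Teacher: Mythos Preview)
Your overall architecture matches the paper's: reduce to producing an $L_0$-eigenvector in $W=\{w:\mathcal{R}_+w=0\}$, dispose of the torsion case directly, and in the torsion-free case derive a contradiction with simplicity via the generalized Verma module. Steps~1 and~3 are fine, and in Step~2 you correctly isolate the obstruction: the maximal submodule $N\subset\widetilde{M}$ need not be homogeneous for the $\mathbb{N}$-grading, because $L_0$ does not act semisimply on the free top.

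The gap is that you do not close the non-graded case; the sentence ``handling this is exactly where one invokes the irreducibility of the genuine Verma modules'' is a pointer, not an argument, and your graded/non-graded dichotomy is not how the paper proceeds. The paper shows directly that, when $M=\mathbb{C}[L_0]v\oplus\mathbb{C}[L_0]G_0v$ is free, the kernel of $\varphi:\mathrm{Ind}_{\mathfrak{b}}^{\mathcal{R}}M\to V$ is zero; then $V\cong\mathrm{Ind}_{\mathfrak{b}}^{\mathcal{R}}M$ has the proper submodules $\mathrm{Ind}_{\mathfrak{b}}^{\mathcal{R}}M(\lambda)$, contradicting simplicity. The mechanism is a \emph{specialize-then-lift} trick. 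Take a $\mathbb{Z}_2$-homogeneous $0\neq u\in\mathrm{Ker}(\varphi)$ and write $u=\sum_i a_iu_i$ with $u_i\in M$ and $a_i\in U(\mathcal{R}_-)$ eigenvectors of $\mathrm{ad}(-L_0)$ of pairwise distinct eigenvalues, say $a_1$ of minimal eigenvalue $-\alpha$ and $a_1u_1\notin M$. By Iohara--Koga there are infinitely many $\mu$ for which the Verma module $\mathrm{Ind}_{\mathfrak{b}}^{\mathcal{R}}(M/M(\mu))$ is irreducible; choose such a $\mu_0$ with $\bar u_1\neq 0$ in $M/M(\mu_0)$, so $\bar u\neq 0$ there. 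Irreducibility of this Verma module produces $x\in U(\mathcal{R}_+)$ of $\mathrm{ad}(-L_0)$-eigenvalue $\alpha$ with $0\neq x\bar u\in\mathbb{C}\bar v$. Computing the same $xu$ upstairs in $\mathrm{Ind}_{\mathfrak{b}}^{\mathcal{R}}M$ gives $xu=f(L_0)v\in M$ for some polynomial $f$ with $f(\mu_0)\neq 0$, hence $0\neq xu\in M\cap\mathrm{Ker}(\varphi)$, contradicting $\mathrm{Ker}(\varphi)\cap M=0$. This specialization argument is precisely the content your Step~2 is missing, and it bypasses the graded/non-graded case split altogether.
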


\begin{proof} Since $V$ is a simple $\mathcal{R}$-module, we may assume that  $c$ acts on $V$  as a scalar $l\in\mathbb{C}$.
Let
$$M=U(\mathfrak{b})v=\mathbb{C}[L_0]v\oplus\mathbb{C}[L_0]G_0v,$$
which is  a $\mathfrak{b}$-module. Then we have
$$V=U(\mathcal{R}_-)M.$$
If $M$ is not a free $\mathbb{C}[L_0, G_0]$-module, then $M$ has an eigenvector with respect to $L_0$ which implies that $V$ is a highest weight module. Now suppose that $M$ is a free $\mathbb{C}[L_0, G_0]$-module. Consider the module epimorphism
$$\varphi: {\rm Ind}_{\mathfrak{b}}^{\mathcal{R}}M\rightarrow V.$$
It is sufficient to show that $\varphi$ is an isomorphism, i. e., ${\rm Ker}(\varphi)=\{0\}$.

Clearly, ${\rm Ker}(\varphi)\cap M=\{0\}$. If ${\rm Ker}(\varphi)\neq\{0\}$, we can take a nonzero homogeneous element $u\in {\rm Ker}(\varphi)$. Write
$$u=\sum\limits_{i=1}^{n}u_iv,$$
where
$$u_i=\sum_{j}a_{i, j}f_{i, j}(L_0, G_0)\in U(\mathcal{R}_-\oplus\mathcal{R}_0), i=1, \cdots, n,$$
are nonzero eigenvectors of ${\rm ad}(-L_0)$ with pairwise distinct eigenvalues $-z_i\in -\mathbb{Z}_+$, for $a_{i, j}\in U(\mathcal{R_-})_{-z_i}$ and $f_{i, j}(L_0, G_0)\in\mathbb{C}[L_0, G_0]$.
Since $u\notin M$, we assume that $u_1v\notin M$ and $u_1$ has the minimal eigenvalue $-\alpha$. Let $M(\lambda)$ be a $\mathbb{C}[L_0, G_0]$-submodule of $M$ generated by $L_0v-\lambda v$ for $\lambda\in\mathbb{C}$. Then $M/M(\lambda)$ is a two-dimensional $\mathfrak{b}$-module with
\begin{equation*}
  \begin{aligned}
    &\mathcal{R}_+(v+M(\lambda))=\mathcal{R}_+(G_0v+M(\lambda))=0,\\
    &L_0(v+M(\lambda))=\lambda v+M(\lambda),\ L_0(G_0v+M(\lambda))=\lambda G_0v+M(\lambda), \\
    &G_0(v+M(\lambda))=G_0v+M(\lambda),\ G_0(G_0v+M(\lambda))=(\lambda-\frac{1}{24}l)v+M(\lambda).\\
  \end{aligned}
\end{equation*}

From \cite{IK2003-1} and \cite{IK2006}, it is known that there are infinitely many nonzero $\mu\in\mathbb{C}$ with $\mu\neq \frac{1}{24}l$ such that the Verma module
$$W={\rm Ind}_{\mathfrak{b}}^{\mathcal{R}}(M/M(\mu))$$
is irreducible. Take such a nonzero $\mu_0$ with $f_{1, j}(L_0, G_0)v\neq 0$ in $M/M(\mu_0)$ for some $j$. And we can find an element $x\in U(\mathcal{R}_+)$ with the eigenvalue $\alpha$ with respect to ${\rm ad}(-L_0)$ such that
$$0\neq xu\in \mathbb{C}v$$
in $W_0={\rm Ind}_{\mathfrak{b}}^{\mathcal{R}}(M/M(\mu_0))$. Hence
$$0\neq xu=f_1(L_0)v$$
in ${\rm Ind}_{\mathfrak{b}}^{\mathcal{R}}M,$ where $0\neq f_1(L_0)\in \mathbb{C}[L_0]$ with $f_1(\mu_0)\neq 0$. Thus we get a nonzero element $xu\in M\cap {\rm Ker}(\varphi)$, which is a contradiction.
\end{proof}

\begin{Lemma}\label{Lem v smooth}
Let $V$ be a simple $\mathcal{R}$-module. If there exists a nonzero vector $v\in V$ and $M\in \mathbb{Z}_+$ such that $L_mv=G_mv=0$ for all $m\geq M$, then $V$ is a smooth module.
\end{Lemma}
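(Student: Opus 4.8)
The plan is to exploit simplicity of $V$ to upgrade the vanishing condition at the single vector $v$ to a vanishing condition holding on all of $V$, which is exactly smoothness. First I would show that the set
\[
V' := \{ u \in V \mid \exists\, N \in \mathbb{Z}_+ \text{ such that } L_m u = G_m u = 0 \text{ for all } m \geq N \}
\]
is an $\mathcal{R}$-submodule of $V$. It is clearly a graded subspace, and it is nonzero since $v \in V'$. To see it is a submodule, take $u \in V'$ with bound $N$, and let $x \in \mathcal{R}_j$ be a homogeneous generator ($x = L_j$ or $x = G_j$ for some $j \in \mathbb{Z}$, or $x = c$). For $m$ large (concretely $m > N + |j|$, and also $m \geq N$), the bracket $[x_m', xu]$-type computation gives $L_m(xu) = \pm x(L_m u) + [L_m, x]u$, where $[L_m, x]$ is again a multiple of $L_{m+j}$ or $G_{m+j}$ with $m + j \geq N$; hence both terms kill $u$, so $L_m(xu) = 0$, and similarly $G_m(xu) = 0$. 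Thus $xu \in V'$. Since $\mathcal{R}$ is generated by these homogeneous elements, $V'$ is a submodule; by simplicity of $V$, $V' = V$.

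Once $V' = V$, every vector of $V$ is annihilated by $L_m$ and $G_m$ for $m$ sufficiently large, which is precisely the definition of a smooth module (note $\mathcal{R}_m = \operatorname{span}_{\mathbb{C}}\{L_m, G_m, \delta_{m,0} c\}$, so for $m > 0$ large, $\mathcal{R}_m u = 0$). This finishes the argument.

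I do not expect a genuine obstacle here; the only point requiring mild care is the bookkeeping on how large $m$ must be taken in the submodule verification — one needs $m$ large enough that both $m \geq N$ and $m + j \geq N$ hold, where $j$ is the degree of the applied generator, and this is why it suffices to check closure under the generators $L_j, G_j$ (each of fixed degree) rather than under arbitrary elements of $\mathcal{R}$ at once. An alternative phrasing avoiding generators altogether: for any $u \in V$, write $u = \sum_s y_s v$ with $y_s \in U(\mathcal{R})$ (possible since $U(\mathcal{R})v = V$ by simplicity), decompose each $y_s$ into $\mathcal{R}_0$-weight components, and observe that applying $L_m$ or $G_m$ for $m$ exceeding the top degree appearing in the finitely many $y_s$ (plus the bound $M$) annihilates $u$; this is really the same computation organized differently. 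Either way, the proof is short and the substance is the standard "annihilator-type conditions propagate to a submodule" principle.
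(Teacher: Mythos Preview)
Your proof is correct and matches the paper's approach essentially verbatim: the paper's argument is precisely your ``alternative phrasing,'' writing an arbitrary vector of $V$ as a finite sum of monomials $g_{i_1}\cdots g_{i_k}v$ with each $g_{i_j}\in\mathcal{R}_{i_j}$ (using PBW and simplicity, with the indices taken $<M$), then setting $N = M + |i_1|+\cdots+|i_k|$ so that $\mathcal{R}_n$ annihilates the monomial for all $n\geq N$. Your submodule formulation is just a repackaging of the same commutator bookkeeping.
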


\begin{proof}
By the PBW theorem and simplicity of $V$, any vector of $V$ can be written as a linear combination of elements of the form
$$g_{i_1}g_{i_2}\cdots g_{i_k}v,$$
where $g_{i_j}\in\mathcal{R}_{i_j}$ for all $i_j\in\mathbb{Z}$ and $i_j<M$. Note that $[\mathcal{R}_i, \mathcal{R}_j]\subseteq\mathcal{R}_{i+j}$. For any $g_{i_1}g_{i_2}\cdots g_{i_k}v\in V$, take $N=M+|i_1|+\cdots+|i_k|$, then it is easy to check $$\mathcal{R}_n(g_{i_1}g_{i_2}\cdots g_{i_k}v)=0,$$
for all $n\geq N$. Thus $V$ is a smooth $\mathcal{R}$-module.
\end{proof}

For $t\in\mathbb{N}$, let $\mathfrak{m}^{(t)}=\bigoplus\limits_{m>t}(\mathbb{C}L_m\oplus\mathbb{C}G_m)$ and $\mathfrak{b}^{(t)}=\mathfrak{b}/\mathfrak{m}^{(t)}$. Note that  $\mathfrak{m}^{(0)}=\mathcal{R}_+$. Now we present our main result.

\begin{Theorem}\label{Th equivalent}
Let $S$ be a simple $\mathcal{R}$-module, then the following statements are equivalent:
\begin{enumerate}
    \item[\rm (1)] there exists ${t}\in\mathbb{Z}_+$ such that the action of $L_t$ on $S$ is locally finite;
    \item[\rm (2)] $S$ is a smooth module;
    \item[\rm (3)] $S$ is either a highest weight module, or isomorphic to the induced module ${\rm Ind}(V)$, where $V$ is a simple $\mathfrak{b}$-module satisfying the conditions in Lemma \ref{Lem G_jV}, that is, $V$ is a simple $\mathfrak{b}^{(t)}$-module for some $t\in\mathbb{Z}_+$.
\end{enumerate}
\end{Theorem}

\begin{proof}
It is enough to prove (1)$\Rightarrow$(2) and (2)$\Rightarrow$(3), since (3)$\Rightarrow$(1) is obvious by definition.

{\bf (1)$\Rightarrow$(2).}
Since $L_t$ acts locally finitely on $S$, there exists a nonzero vector $v\in S$ and $\lambda\in\mathbb{C}$ such that
$$L_tv=\lambda v.$$
Now by the Lemma \ref{Lem v smooth}, our task is to find some $M\in \mathbb{Z}_+$ satisfying $\mathcal{R}_mv=0$ for all $m\geq M$.

Note that there exists $N\in \mathbb{Z}_+$ such that $L_nv=0$ for all $n\geq N$ thanks to the Lemma 3.10 in \cite{MNTZ2023}.
Fix any $j\in\{t+1, t+2, \cdots, 2t\}$. Since ${\rm dim}(U(L_t)G_jv)$ is finite and
$$(L_t-\lambda)G_jv=(\frac{t}{2}-j)G_{t+j}v,$$
there is a smallest $n_j\in\mathbb{N}$ such that
$$G_{j+s_jt}v,\ G_{j+(s_j+1)t}v,\ \cdots,\ G_{j+(s_j+n_j)t}v$$
are linearly dependent for some $s_j\in\mathbb{N}$. In other words, there exists a polynomial
$$p_j(y)=a_{j, 0}+a_{j, 1}(y-\lambda)+\cdots+a_{j, n_j}(y-\lambda)^{n_j},\ a_{j, 0}a_{j, n_j}\neq 0,$$
such that
$$p_j(L_t)G_{j+s_jt}v=0.$$
Applying $L_t-\lambda$ to the equation above repeatedly, we deduce that
\begin{equation}\label{equ p_jG}
p_j(L_t)G_{j+st}v=0
\end{equation}
for all $s\geq s_j$. That is, there exists a polynomial of $L_t$ with the smallest degree $n_j\in\mathbb{N}$ which annihilates $G_{j+st}v$ for sufficiently large $s$.

Assume that we can take $j\in\{t+1, t+2, \cdots, 2t\}$ such that $n_j\geq 1$. Consider (\ref{equ p_jG}) for $2s$, where $st\geq{\rm max}\{N, s_jt\}$, which leads to
\begin{equation}\label{equ q1}
\begin{aligned}
0
&=p_j(L_t)G_{j+2st}v \\
&=\left(a_{j, 0}+a_{j, 1}(L_t-\lambda)+\cdots+a_{j, n_j}(L_t-\lambda)^{n_j}\right)G_{j+2st}v  \\
&=(a_{j, 0}G_{j+2st}+a_{j, 1}(\frac{t}{2}-j-2st)G_{j+(2s+1)t}+\cdots  \\
&+a_{j, n_j}(\frac{t}{2}-j-2st)(\frac{t}{2}-j-(2s+1)t)\cdots(\frac{t}{2}-j-(2s+n_j-1)t)G_{j+(2s+n_j)t})v.
\end{aligned}
\end{equation}
Besides, we have following bracket relation for $st\geq{\rm max}\{N, s_jt\}$
\begin{equation}\label{equ q2}
\begin{aligned}
0
&=\left[L_{st}, p_j(L_t)G_{j+st}\right]v \\
&=\Big[L_{st}, a_{j,0}G_{j+st}+\cdots+a_{j,n_j}(\frac{t}{2}-j-st)\cdots(\frac{t}{2}-j-(s+n_j-1)t)G_{j+(s+n_j)t}\Big]v\\
&=(a_{j, 0}(-\frac{st}{2}-j)G_{j+2st}+a_{j, 1}(\frac{t}{2}-j-st)(\frac{st}{2}-j-(s+1)t)G_{j+(2s+1)t}+\cdots  \\
&+a_{j, n_j}(\frac{t}{2}-j-st)\cdots(\frac{t}{2}-j-(s+n_j-1)t)(\frac{st}{2}-j-(s+n_j)t)G_{j+(2s+n_j)t})v.
\end{aligned}
\end{equation}
Consider the last terms of (\ref{equ q1}) and (\ref{equ q2}), and denote by
\begin{equation*}
\begin{aligned}
q_{j,1}(y)&=a_{j, n_j}(\frac{t}{2}-j-2yt)(\frac{t}{2}-j-(2y+1)t)\cdots(\frac{t}{2}-j-(2y+n_j-1)t), \\
q_{j,2}(y)&=a_{j, n_j}(\frac{t}{2}-j-yt)\cdots(\frac{t}{2}-j-(y+n_j-1)t)(\frac{yt}{2}-j-(y+n_j)t).
\end{aligned}
\end{equation*}
We claim that $(-\frac{yt}{2}-j)q_{j,1}$ and $q_{j,2}$ are linearly independent since
$$q_{j,1}\left(-\frac{2(j+n_jt)}{t}\right)\neq 0\ \text{and}\ q_{j,2}\left(-\frac{2(j+n_jt)}{t}\right)= 0.$$
Thus we can find a nonzero polynomial $q_{j}(y)$ of degree less that $n_j$ such that
$$q_{j}(L_t)G_{j+s't}v=0$$
for sufficiently large $s'$, which is a contradiction to the minimality of $n_j$. Hence $n_j=0$ for all $j\in\{t+1, t+2, \cdots, 2t\}$, and
$$M={\rm max}\{N, j+s_jt|t+1\leq j\leq 2t\},$$
as desired.

{\bf (2)$\Rightarrow$(3).}
Suppose that $S$ is a simple smooth $\mathcal{R}$-module. Then the vector space
$$N_{s}:=\{v\in S|G_kv=L_kv=0\ \text{for all }\ k>s \}$$
is nonzero for sufficiently large $s\in\mathbb{N}$. Denote by $s_0$ the smallest integer such that $N_{s_0}\neq \{0\}$. If $s_0=0$, then $S$ is a highest weight module by Theorem \ref{h w m}.

If $s_0\in\mathbb{Z}_+$, let $V:=N_{s_0}$ for convenience. Then for $k>s_0$ and $i\in\mathbb{N}$, we have
\begin{equation*}
\begin{aligned}
&L_k(G_iv)=(\frac{k}{2}-i)G_{k+i}v=0, \\
&G_k(G_iv)=2L_{k+i}v=0,
\end{aligned}
\end{equation*}
where $v\in V$, yielding $G_iv\in V$ for all $i\in\mathbb{N}$. Similarly, $L_iv\in V$ for all $i\in\mathbb{N}$. Hence $V$ is a $\mathfrak{b}$-module. We claim the action of $L_{s_0}$ on $V$ is injective. Otherwise, let
$${\rm Ann}_{V}(L_{s_0})=\{v\in V| L_{s_0}v=0\}\neq \{0\},$$
on which the action of $G_{s_0}$ is injective by the minimality of $s_0$. Then $G_{s_0}^2=L_{2s_0}$ is also injective on ${\rm Ann}_{V}(L_{s_0})$, which contradicts the definition of $V$.

Since $S$ is simple and generated by $V$, we have following canonical surjective map
\begin{equation*}
\begin{aligned}
\pi:\
& {\rm Ind}(V)\rightarrow S \\
& 1\otimes v\mapsto v
\end{aligned}
\end{equation*}
for any $v\in V$. We remain to show $\pi$ is injective, i.e. ${\rm Ker}(\pi)=0$. Otherwise, we have ${\rm Ker}(\pi)\cap V\neq \{0\}$ by Lemma \ref{Lem (k,i)} since ${\rm Ker}(\pi)$ is a $\mathcal{R}$-submodule of ${\rm Ind}(V)$, which is absurd. Thus $\pi$ is a bijection and $V$ is a simple $\mathfrak{b}$-module as desired.
\end{proof}

As a direct consequence of Theorem \ref{Th equivalent}, we have the following classification of simple smooth modules over the Ramond algebra.

\begin{Corollary}\label{maincor}
 Any simple smooth module over the  Ramond algebra is either a simple highest weight module or isomorphic to the induced module ${\rm Ind}(V)$, where $V$ is a simple $\mathfrak{b}$-module satisfying the conditions in Lemma \ref{Lem G_jV}, that is, $V$ is a simple $\mathfrak{b}^{(t)}$-module for some $t\in\mathbb{Z}_+$.
\end{Corollary}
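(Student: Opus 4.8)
The plan is to deduce Corollary~\ref{maincor} directly from Theorem~\ref{Th equivalent}. By the very definition of a smooth module, a simple smooth $\mathcal{R}$-module $S$ satisfies condition~(2) of that theorem, so the implication (2)$\Rightarrow$(3) proved there applies verbatim and yields that $S$ is either a simple highest weight module or isomorphic to $\mathrm{Ind}_l(V)$ for some $l\in\mathbb{C}$ and some simple $\mathfrak{b}$-module $V$ satisfying the hypotheses of Lemma~\ref{Lem G_jV}. This is exactly the content of the corollary, so I would not introduce any new argument: the statement is a repackaging of Theorem~\ref{Th equivalent} in the language of ``simple smooth modules over the Ramond algebra''.

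The one point I would spell out is the reformulation in the parenthetical clause, namely that a simple $\mathfrak{b}$-module $V$ obeying the two conditions of Lemma~\ref{Lem G_jV} for some fixed $t\in\mathbb{Z}_+$ is precisely a simple $\mathfrak{b}^{(t)}$-module (on which, in addition, $L_t$ acts injectively). Indeed, hypothesis~(2) of Lemma~\ref{Lem G_jV} gives $L_iV=0$ for $i>t$, and Lemma~\ref{Lem G_jV} itself then gives $G_jV=0$ for $j>t$; hence $\mathfrak{m}^{(t)}$ annihilates $V$ and the $\mathfrak{b}$-action descends to the finite dimensional solvable Lie superalgebra $\mathfrak{b}^{(t)}=\mathfrak{b}/\mathfrak{m}^{(t)}$. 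I would also note that, $V$ being simple, Schur's lemma (applicable since $U(\mathfrak{b})$ is of countable dimension over $\mathbb{C}$) makes $c$ act on $V$ as a scalar, and this scalar is the $l$ in $\mathrm{Ind}_l(V)$.

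Consequently I do not expect any real obstacle here: every step is a definitional unwinding or an appeal to Theorem~\ref{Th equivalent}, whose two substantive implications rest in turn on Theorem~\ref{h w m} (the highest weight case) and on Lemma~\ref{Lem (k,i)}/Theorem~\ref{Th simple} (irreducibility of the induced modules). What does remain to be done afterwards, and is taken up in the next part of the paper, is the explicit description of the simple $\mathfrak{b}^{(t)}$-modules for small $t$, which upgrades the corollary to a concrete classification.
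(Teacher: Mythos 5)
Your proposal matches the paper exactly: the corollary is stated there as a direct consequence of Theorem \ref{Th equivalent}, obtained by applying the implication (2)$\Rightarrow$(3) to a simple smooth module, and your added remark that the hypotheses of Lemma \ref{Lem G_jV} make $V$ a simple $\mathfrak{b}^{(t)}$-module (with $c$ acting as the scalar $l$) is just the same definitional unwinding the paper intends. No gaps; nothing further is needed.
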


%\section{Classification of simple $\mathfrak{b}^{(t)}$-modules}

At the end of this section, we consider   simple modules over the quotient algebra $\mathfrak{b}^{(t)}$, and classify all simple $\mathfrak{b}^{(t)}$-modules for $t=0$ and $1$.

For $t=0$, $\mathfrak{b}^{(0)}=\mathfrak{b}^{(0)}_{\bar{0}}\oplus\mathfrak{b}^{(0)}_{\bar{1}}$ is a 3-dimensional solvable Lie superalgebra, where $\mathfrak{b}^{(0)}_{\bar{0}}=\mathbb{C}L_0\oplus\mathbb{C}c$, and $\mathfrak{b}^{(0)}_{\bar{1}}=\mathbb{C}G_0$. The subalgebra $\mathfrak{b}^{(0)}_{\bar{0}}$ is commutative and its simple modules are all one-dimensional.

Let $V=V_{\bar{0}}\oplus V_{\bar{1}}$ be a simple $\mathfrak{b}^{(0)}$-module. We can always assume $V_{\bar{0}}\neq 0$ up to parity-change.
If $V_{\bar{1}}=0$, then
$$G_0V=0=\left(L_0-\frac{1}{24}c\right)V,$$
yielding $V$ is a simple $\mathfrak{b}^{(0)}_{\bar{0}}$-module. Thus $V$ must be one-dimensional of the form $\mathbb{C}w$ with
\begin{equation}\label{equ b0 V=U}
\begin{aligned}
&L_0w=\lambda w,\ cw=lw,
\end{aligned}
\end{equation}
where $\lambda, l\in\mathbb{C}$ with $
\lambda=\frac{1}{24}l$.

If $V_{\bar{1}}\neq 0$. Take a nonzero $\mathfrak{b}^{(0)}_{\bar{0}}$-submodule $U$ of $V_{\bar{0}}$. It is easy to check that $U\oplus G_0U$ is a $\mathfrak{b}^{(0)}$-submodule of $V$, and  $U$ is actually a simple $\mathfrak{b}^{(0)}_{\bar{0}}$-module.  Thus $U=\mathbb{C}w$. Moreover, for any vector $v=aw+bG_0w\in V$, $a, b \in\mathbb{C}$,  we have
\begin{equation}\label{equ b0V}
\begin{aligned}
&L_0v=\lambda aw+\lambda bG_0w=\lambda v,\\
&cv=l aw+l bG_0w=l v,\\
&G_0v=aG_0w+bG_0^2w=b(\lambda-\frac{1}{24}l)w+aG_0w.\\
\end{aligned}
\end{equation}
Whence we have proved the following proposition.
\begin{Proposition}\label{Prop b0}
Any simple $\mathfrak{b}^{(0)}$-module $V$ is isomorphic to $\mathbb{C}w\oplus \mathbb{C}G_0w$ defined by (\ref{equ b0V}), where $\mathbb{C}w$ is a one-dimensional simple $\mathfrak{b}^{(0)}_{\bar{0}}$-module, up to parity-change. In particularly, $V$ is one-dimensional defined by (\ref{equ b0 V=U}), if $G_0w=0$.     \hfill$\square$
\end{Proposition}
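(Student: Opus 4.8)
The plan is to exploit how small $\mathfrak{b}^{(0)}$ is. Modulo $c$ it is spanned by the even element $L_0$ and the odd element $G_0$, and the only nontrivial relation $[G_0,G_0]=2L_0-\tfrac{1}{12}c$ amounts to $G_0^2=L_0-\tfrac{1}{24}c$ inside $U(\mathfrak{b}^{(0)})$; together with $[L_0,G_0]=0$ this says that, after fixing the central charge, $L_0$ is redundant ($L_0=G_0^2+\tfrac{l}{24}$) and the whole algebra is "driven" by $G_0$, whose square is even. First I would apply Schur's lemma to have $c$ act as a scalar $l$, and use the parity-change functor to arrange $V_{\bar{0}}\neq 0$; note $V_{\bar{0}}$ is automatically a module over the commutative algebra $\mathfrak{b}^{(0)}_{\bar{0}}=\mathbb{C}L_0\oplus\mathbb{C}c$, whose simple modules are one-dimensional.

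Then I would split into two cases. If $V_{\bar{1}}=0$, then $G_0V\subseteq V_{\bar{1}}=0$, so $G_0$ acts as $0$, whence $L_0-\tfrac{l}{24}=G_0^2$ acts as $0$; the $\mathfrak{b}^{(0)}$-submodules of $V$ are then exactly the $\mathfrak{b}^{(0)}_{\bar{0}}$-submodules, so $V$ is a simple module over the commutative algebra $\mathfrak{b}^{(0)}_{\bar{0}}$, hence one-dimensional, $V=\mathbb{C}w$ with $L_0w=\lambda w$, $cw=lw$ and $\lambda=\tfrac{l}{24}$ forced by $G_0^2w=0$; this is (\ref{equ b0 V=U}). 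If $V_{\bar{1}}\neq 0$, the key step is to observe that for \emph{any} nonzero $\mathfrak{b}^{(0)}_{\bar{0}}$-submodule $U\subseteq V_{\bar{0}}$ the $\mathbb{Z}_2$-graded subspace $U\oplus G_0U$ is a $\mathfrak{b}^{(0)}$-submodule of $V$: $L_0$ and $c$ preserve $U$; $L_0(G_0U)=G_0(L_0U)\subseteq G_0U$ because $[L_0,G_0]=0$; $c$ is central; and $G_0(G_0U)=G_0^2U=(L_0-\tfrac{l}{24})U\subseteq U$. By simplicity $U\oplus G_0U=V$, and intersecting with $V_{\bar{0}}$ (using $G_0U\subseteq V_{\bar{1}}$) gives $V_{\bar{0}}=U$. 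Hence $V_{\bar{0}}$ admits no proper nonzero $\mathfrak{b}^{(0)}_{\bar{0}}$-submodule, i.e.\ it is a simple $\mathfrak{b}^{(0)}_{\bar{0}}$-module, so $V_{\bar{0}}=\mathbb{C}w$ with $L_0w=\lambda w$, $cw=lw$. Then $V=\mathbb{C}w\oplus\mathbb{C}G_0w$ with $G_0w\neq 0$, the action is read off from $G_0^2w=(\lambda-\tfrac{l}{24})w$ exactly as in (\ref{equ b0V}), and simplicity forces $\lambda\neq\tfrac{l}{24}$ (otherwise $\mathbb{C}G_0w$ would be a proper graded submodule).

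The only point requiring real care is turning "$V_{\bar{0}}$ is a priori an arbitrary, possibly infinite-dimensional $\mathbb{C}[L_0]$-module" into "$V_{\bar{0}}$ is one-dimensional": the doubling trick $U\mapsto U\oplus G_0U$ is precisely what collapses it, after which I invoke the standard fact (Hilbert's Nullstellensatz) that a simple module over a finitely generated commutative $\mathbb{C}$-algebra is one-dimensional. Everything else — the bracket identities $[L_0,G_0]=0$ and $G_0^2=L_0-\tfrac{1}{24}c$, centrality of $c$, the triviality $G_0U\cap U=0$ from the grading, and the explicit action formulas — is routine.
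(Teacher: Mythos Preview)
Your proof is correct and follows essentially the same approach as the paper's: reduce to $V_{\bar 0}\neq 0$ by parity-change, split on whether $V_{\bar 1}$ vanishes, and in the nontrivial case use the ``doubling'' observation that $U\oplus G_0U$ is a $\mathfrak{b}^{(0)}$-submodule for any $\mathfrak{b}^{(0)}_{\bar 0}$-submodule $U\subseteq V_{\bar 0}$, forcing $U=V_{\bar 0}$ to be simple over the commutative even part and hence one-dimensional. Your write-up is in fact more detailed than the paper's (which leaves the submodule check as ``easy to check''), and your extra remark that simplicity in the two-dimensional case forces $\lambda\neq\tfrac{l}{24}$ is a correct observation the paper does not make explicit.
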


Now we consider irreducible modules over $\mathfrak{b}^{(t)}$ for $t>0$. Note that  $\mathfrak{b}^{(t)}=\mathfrak{b}^{(t)}_{\bar{0}}\oplus\mathfrak{b}^{(t)}_{\bar{1}}$ is a $(2t+3)$-dimensional solvable Lie superalgebra, where $$\mathfrak{b}^{(t)}_{\bar{0}}=\mathbb{C}L_0\oplus\mathbb{C}L_1\oplus\cdots\oplus\mathbb{C}L_t\oplus\mathbb{C}c,\text{ and }\mathfrak{b}^{(t)}_{\bar{1}}=\mathbb{C}G_0\oplus\mathbb{C}G_1\oplus\cdots\oplus\mathbb{C}G_t.$$
According to Theorem \ref{Th equivalent}, we only need to consider irreducible $\mathfrak{b}^{(t)}$-modules on which  $L_t$ acts injectively.

For $t=1$, let $V=V_{\bar{0}}\oplus V_{\bar{1}}$ be a simple $\mathfrak{b}^{(1)}$-module with $V_{\bar{0}}\neq 0$ as above. We may assume that the central element
$c$ acts on $V$ as the scalar $l\in\mathbb C$.
Firstly suppose that $V_{\bar{1}}\neq 0$.
If $G_1V=0$, we have
$$L_1V=\frac{1}{2}(G_1G_0+G_0G_1)V=0,$$
which contradicts our assumption.
So $G_1V\neq 0$. From $G_1^2V=0$, there is always a nonzero $\mathfrak{b}^{(1)}_{\bar{0}}$-submodule $U$ of $G_1V_{\bar{1}}$ (or $G_1V_{\bar{0}}$). We see that
$G_1U=0.$
Then we can check that $U\oplus G_0U$ becomes a $\mathfrak{b}^{(1)}$-submodule of $V$, which implies that $V=U\oplus G_0U$. Furthermore, $U$ must be  a simple $\mathfrak{b}^{(1)}_{\bar{0}}$-module, and $G_0$ does not annihilate any nonzero vectors in $U$. Note that all simple $\mathfrak{b}^{(1)}_{\bar{0}}$-modules were classified by Block in \cite{B1981}. Now, $\mathfrak{b}^{(1)}$ acts on $V$ as
\begin{equation}\label{equ b1 GV}
\begin{aligned}
&xG_0u=G_0xu,\ \forall{x}\in \mathfrak{b}^{(1)}_{\bar{0}}, u\in U,\\
&G_0G_0u=(L_0-\frac{1}{24}l)u,\\
&G_1u=0, \,\,\,G_1G_0u=2L_1u.\\
\end{aligned}
\end{equation}
Note that $U$ and $G_0U$ are isomorphic irreducible $\mathfrak{b}^{(1)}_{\bar{0}}$-modules.

If $V_{\bar{1}}=0$, then $L_1V=0$, which contradicts our assumption.
Whence we have proved the following proposition.

\begin{Proposition} \label{Prop b1}
Any simple $\mathfrak{b}^{(1)}$-module $V$ is either a simple $\mathfrak{b}^{(0)}_{\bar{0}}$-module, or isomorphic to $U\oplus G_0U$ defined by (\ref{equ b1 GV}), where $U$ and $G_0U$ are simple $\mathfrak{b}^{(1)}_{\bar{0}}$-modules up to parity-change, and  $G_0$ does not annihilate any nonzero vectors in $U$. \hfill$\square$
\end{Proposition}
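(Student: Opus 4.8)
\textbf{Proof plan for Proposition~\ref{Prop b1}.}
The plan is to peel off the degenerate cases first and then, in the generic case, to manufacture the decomposition $V=U\oplus G_0U$ by hand. Let $V=V_{\bar 0}\oplus V_{\bar 1}$ be a simple $\mathfrak{b}^{(1)}$-module; applying the parity-change functor if necessary I would assume $V_{\bar 0}\neq 0$, and since $V$ is simple and $c$ is even and central, I would let $c$ act as a scalar $l\in\mathbb{C}$. The two relations I will lean on throughout are the square-zero relation $G_1^2=\tfrac12[G_1,G_1]=L_2=0$ (which holds in $\mathfrak{b}^{(1)}$) and $G_0^2=L_0-\tfrac1{24}c$.

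First I would handle the case $V_{\bar 1}=0$: then $G_0$ and $G_1$ act as $0$, so $[G_0,G_1]=2L_1$ and $[G_0,G_0]=2L_0-\tfrac1{12}c$ force $L_1=0$ and $L_0=\tfrac{l}{24}$ on $V$; hence $V$ is one-dimensional, i.e.\ a simple $\mathfrak{b}^{(0)}_{\bar 0}$-module, which is the first alternative. Next, assuming $V_{\bar 1}\neq 0$, I would treat the subcase $G_1V=0$: then $L_1V=\tfrac12(G_0G_1+G_1G_0)V=0$ (using $G_1V=0$ and $G_1G_0V\subseteq G_1V$), so $L_1$ and $G_1$ act trivially and $V$ is a simple $\mathfrak{b}^{(0)}$-module; Proposition~\ref{Prop b0} then gives $V=\mathbb{C}w\oplus\mathbb{C}G_0w$, and $G_0w\neq 0$ since $V_{\bar 1}\neq 0$, so $V$ already has the form $U\oplus G_0U$ of (\ref{equ b1 GV}) with $U=\mathbb{C}w$ a one-dimensional (hence simple) $\mathfrak{b}^{(1)}_{\bar 0}$-module on which $G_0$ is injective.

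The main case is $G_1V\neq 0$. Here I would set $U:=G_1V_{\bar 1}$ or $U:=G_1V_{\bar 0}$, whichever is nonzero, and (after a parity-change) assume $U\subseteq V_{\bar 0}$; note $G_1U=0$ because $G_1^2=0$. Using $[L_0,G_1]=-G_1$ and $[L_1,G_1]=0$ one checks that $U$ is a $\mathfrak{b}^{(1)}_{\bar 0}$-submodule of $V$. Then, invoking $[L_0,G_0]=0$, $[L_1,G_0]=\tfrac12 G_1$, $[c,G_0]=0$, $G_0^2=L_0-\tfrac1{24}c$ and $[G_1,G_0]=2L_1$ together with $G_1U=0$, I would verify that $U+G_0U$ is stable under $L_0,L_1,c,G_0,G_1$; since it is nonzero and $V$ is simple, $V=U\oplus G_0U$ (the sum is direct because $U\subseteq V_{\bar 0}$ and $G_0U\subseteq V_{\bar 1}$). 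The very same bracket computation shows that $U'+G_0U'$ is a $\mathfrak{b}^{(1)}$-submodule for every $\mathfrak{b}^{(1)}_{\bar 0}$-submodule $U'\subseteq U$, so simplicity of $V$ forces $U'=0$ or $U'=U$; thus $U$ is a simple $\mathfrak{b}^{(1)}_{\bar 0}$-module. Finally I would note that $\{u\in U:G_0u=0\}$ is a $\mathfrak{b}^{(1)}_{\bar 0}$-submodule of $U$ (this uses $[L_1,G_0]=\tfrac12 G_1$ and $G_1U=0$), which cannot be all of $U$ lest $V_{\bar 1}=G_0U=0$; hence $G_0$ is injective on $U$ and $G_0U\cong U$ as $\mathfrak{b}^{(1)}_{\bar 0}$-modules. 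Reading the actions of $G_0$ and $G_1$ on $U\oplus G_0U$ off the bracket relations then reproduces (\ref{equ b1 GV}) verbatim.

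I expect the main obstacle to be the verification that $U+G_0U$ — and, for the simplicity argument, each $U'+G_0U'$ — is a $\mathfrak{b}^{(1)}$-submodule: this is the one place where the defining relations of $\mathfrak{b}^{(1)}$, the identities $G_1^2=0$ and $G_0^2=L_0-\tfrac1{24}c$, and careful bookkeeping of the $\mathbb{Z}_2$-grading all come into play simultaneously. Everything else is organizational, and the classification of simple $\mathfrak{b}^{(1)}_{\bar 0}$-modules (Block, \cite{B1981}) is needed only afterwards, to make the resulting list explicit.
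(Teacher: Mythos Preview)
Your proposal is correct and follows essentially the same route as the paper: produce a homogeneous subspace $U$ inside $G_1V$ with $G_1U=0$, check that $U\oplus G_0U$ is a $\mathfrak{b}^{(1)}$-submodule, conclude $V=U\oplus G_0U$ by simplicity, and then deduce that $U$ is a simple $\mathfrak{b}^{(1)}_{\bar 0}$-module on which $G_0$ is injective. The only substantive difference is organizational: the paper works throughout under the standing hypothesis (stated just before the proposition) that $L_1$ acts injectively, so it dispatches the cases $G_1V=0$ and $V_{\bar 1}=0$ as contradictions, whereas you treat these cases directly and show they feed into the first alternative or into the $U\oplus G_0U$ form with $U$ one-dimensional; your treatment is thus a bit more self-contained.
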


\section{Weak modules   for   vertex operator superalgebras}\label{Sec weakmod}
\textit{The Neveu-Schwarz algebra} $\mathcal{N}=\mathcal{N}_{\bar 0}\oplus\mathcal{N}_{\bar 1}$ is the Lie superalgebra with a designated basis
$\{L_m,G_{p}, c\mid m\in\Z,p\in\frac{1}{2}+\Z\}$, where $\mathcal{N}_{\bar 0}=\mathrm{span}\{L_m,c\mid m\in\Z\}$, $\mathcal{N}_{\bar 1}=\mathrm{span}\{G_p\mid p\in\frac{1}{2}+\Z\}$, and
the Lie super-bracket is given by
\begin{equation*}\label{def1.1}
	\aligned
	&[L_m,L_n]= (m-n)L_{m+n}+\frac{m^{3}-m}{12}\delta_{m+n,0}c,\\&
	[G_p,G_q]= 2L_{p+q}+\frac{4p^{2}-1}{12}\delta_{p+q,0}c,\\&
	[L_m,G_p]= \left(\frac{m}{2}-p\right)G_{m+p},\ \  [\mathcal{N},c]=0,
	\endaligned
\end{equation*}
for $m,n\in\Z$, $p,q\in\frac{1}{2}+\Z$.
Note that $\mathcal{N}$  is isomorphic to a subalgebra of $\R$, which is spanned by
$\{L_{m}\mid m\in2\Z\}\cup\{G_p\mid p\in2\Z+1\}\cup\{c\}$.
It is clear that $\mathcal{N}$  has a $\frac{1}{2}\Z$-grading given by the adjoint action
of $L_0$.  Then $\mathcal{N}$ has the following triangular decomposition:
$$\mathcal{N}=\mathcal{N}_+\oplus\mathcal{N}_0\oplus\mathcal{N}_-,$$ where $\mathcal{N}_+=\mathrm{span}\{L_m,G_p\mid m,p>0\}$, $\mathcal{N}_-=\mathrm{span}\{L_m,G_p\mid m,p<0\}$ and $\mathcal{N}_0=\C\{L_0,c\}$.

Set
\begin{eqnarray}\label{LTG511}
L(z)=\sum_{m\in\Z}L_mz^{-m-2},\ \ G(z)=\sum_{n\in\Z}G_{n+\frac{1}{2}}z^{-n-2}.
\end{eqnarray}
From Section $4.2$ of \cite{L}, we have
\begin{eqnarray*}
	&&[L(z_1),L(z_2)]=z_1^{-1}\delta\left(\frac{z_2}{z_1}\right)\frac{d}{d z_2}(L(z_2))
	+2\frac{\partial}{\partial z_2}(z_1^{-1}\delta\left(\frac{z_2}{z_1}\right))L(z_2)
	+\frac{c}{12}(\frac{\partial}{\partial z_2})^3z_1^{-1}\delta\left(\frac{z_2}{z_1}\right),\\
	&&[L(z_1),G(z_2)]=z_1^{-1}\delta\left(\frac{z_2}{z_1}\right)\frac{\partial}{\partial z_2}(G(z_2))
	+\frac{3}{2}\big(\frac{\partial}{\partial z_2}z_1^{-1}\delta\left(\frac{z_2}{z_1}\right)\big)G(z_2),\\
	&&[G(z_1),G(z_2)]=2z_1^{-1}\delta\left(\frac{z_2}{z_1}\right)L(z_2)
	+\frac{c}{3}(\frac{\partial}{\partial z_2})^2z_1^{-1}\delta\left(\frac{z_2}{z_1}\right).
\end{eqnarray*}

For any $h,l\in\C$, let $W(h,l)$ be the Verma module for $\mathcal{N}$ with  highest weight $(h,l)$ with
 a highest weight vector $v$. Set
\begin{eqnarray}\label{5342}
\bar{W}(0,l)=W(0,l)/\langle G_{-\frac{1}{2}}\mathbf{1}\rangle,
\end{eqnarray}
where  $\langle G_{-\frac{1}{2}}v\rangle$
denotes the $\mathcal{N}$-submodule generated by $G_{-\frac{1}{2}}\mathbf{1}$.

Set $\mathbf{1}=v+\langle G_{-\frac{1}{2}}v\rangle\in \bar{W}(0,l)$ and then set
\begin{eqnarray}
\omega=L_{-2}{\bf 1},\ \tau=G_{-\frac{3}{2}}{\bf 1}\in \bar{W}(0,l).
\end{eqnarray}
Then (see \cite{KW,L}) $\bar{W}(0,l)$ admits a vertex operator superalgebra structure which is uniquely determined by
the condition that $\mathbf{1}$ is the vacuum vector and
\begin{eqnarray}
Y(\omega,z)=L(z),\quad Y(\tau,z)=G(z).
\end{eqnarray}

Assume that $\V=\V^{(0)}\oplus  \V^{(1)}$ is  a vertex superalgebra. Define  the following linear map
\begin{eqnarray*}
\psi:\quad   \V&\longrightarrow& \V \\
a+b&\longmapsto&a-b
\end{eqnarray*}
for $a\in \V^{(0)}, b\in \V^{(1)}$. Then $\psi$  is an automorphism of $\V$,
 called the {\em canonical automorphism} (see \cite{FFF}). Furthermore, $\mathrm{Aut}(\bar W(0,c))=\langle \psi\rangle=\Z_2$.

The following results can be found in \cite{L,L1}:

\begin{Lemma}\label{lemm51}
Let $l\in\C$ and define $\bar{W}(0,l)$ as in \eqref{5342}.
Then any weak $\psi$-twisted $\bar W(0,l)$-module $(W,Y_W)$  is a smooth $\R$-module of central charge $l$
with
$$L(z)=Y_W(\omega,z),\quad G(z)=Y_W(\tau,z). $$
On the other hand, for any smooth $\R$-module $W$ of central charge $l$, there exists a weak $\psi$-twisted $\bar W(0,l)$-module
structure $Y_W(\cdot,z)$ on $W$, uniquely determined by
$$Y_W(\omega,z)=L(z),\quad Y_W(\tau,z)=G(z). $$
\end{Lemma}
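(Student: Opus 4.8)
The plan is to use the general correspondence, due essentially to Li \cite{L,L1}, between (twisted) weak modules over a vertex operator superalgebra that is strongly generated by a set of fields and representations of the Lie superalgebra spanned by the corresponding mode operators. Here $\bar W(0,l)$ is strongly generated by $\omega$ and $\tau$, and the three $\delta$-function identities for $[L(z_1),L(z_2)]$, $[L(z_1),G(z_2)]$, $[G(z_1),G(z_2)]$ displayed above encode precisely their mutual operator products; extracting coefficients, these identities are equivalent to the defining relations of the Neveu--Schwarz algebra $\mathcal{N}$ with central charge $l$. The only genuinely new ingredient is the effect of twisting by the canonical automorphism $\psi$.

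First I would treat the forward direction: a weak $\psi$-twisted $\bar W(0,l)$-module $(W,Y_W)$ is a smooth $\R$-module. Since $\psi$ acts as $(-1)^{|a|}$ on homogeneous $a$, the $\psi$-twisted vertex operator $Y_W(a,z)$ involves only integer powers of $z$ when $a$ is even, and — in contrast to the untwisted case, where $\tau$ produces half-integer modes — also only integer powers of $z$ when $a$ is odd. Hence, writing $Y_W(\omega,z)=\sum_{m\in\Z}L_mz^{-m-2}$ and $Y_W(\tau,z)=\sum_{m\in\Z}G_mz^{-m-2}$, the operators $L_m$ and $G_m$ are indexed by $m\in\Z$, which is exactly the Ramond grading. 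Applying the $\psi$-twisted Borcherds commutator formula to the pairs $(\omega,\omega)$, $(\omega,\tau)$, $(\tau,\tau)$ turns the three $\delta$-identities above into the relations of Definition \ref{Def R}, with $c$ acting as the scalar $l$ (the scalar being forced because the central term already appears as a multiple of the identity inside the vertex superalgebra). Finally, the weak-module truncation axiom $Y_W(v,z)w\in W((z))$ applied to $v=\omega$ and $v=\tau$ gives $L_mw=G_mw=0$ for $m\gg 0$, i.e. $\R_mw=0$ for $m$ large; thus $W$ is a smooth $\R$-module of central charge $l$ with $L(z)=Y_W(\omega,z)$ and $G(z)=Y_W(\tau,z)$.

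For the converse, start from a smooth $\R$-module $W$ of central charge $l$. Smoothness says exactly that $L(z)=\sum_{m\in\Z}L_mz^{-m-2}$ and $G(z)=\sum_{m\in\Z}G_mz^{-m-2}$ are lower-truncated (and mutually $\psi$-twisted local) fields on $W$, and the Ramond relations guarantee the twisted locality/operator products among them that match those of $\omega,\tau$ in $\bar W(0,l)$. One then invokes the twisted analogue of the generating-fields theorem of \cite{L,L1}: a compatible family of mutually local $g$-twisted fields on a vector space, matched to a strong generating set of the vertex superalgebra, extends uniquely to a weak $g$-twisted module structure. Since $\bar W(0,l)$ is strongly generated by $\omega$ and $\tau$, this produces a unique weak $\psi$-twisted $\bar W(0,l)$-module structure $Y_W$ on $W$ with $Y_W(\omega,z)=L(z)$ and $Y_W(\tau,z)=G(z)$.

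The main obstacle is the existence direction: one must ensure that the two locally defined fields $L(z),G(z)$ on $W$ really do extend consistently to all of $\bar W(0,l)$, i.e. that no identity among iterated products of $\omega$ and $\tau$ in $\bar W(0,l)$ fails on $W$. Rather than checking this by hand, the clean route is to quote the twisted module existence theorem of \cite{L,L1}, after verifying (the routine part) that the Ramond relations are precisely the $\psi$-twisted locality relations dictated by the operator products of $\omega$ and $\tau$. A secondary point requiring care is the bookkeeping of the half-integer-to-integer shift in the $G$-modes induced by the twist, since that shift is exactly what replaces the Neveu--Schwarz algebra by the Ramond algebra.
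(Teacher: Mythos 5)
The paper offers no proof of this lemma at all --- it simply cites Li's local-system theory of twisted vertex operators \cite{L,L1} --- and your proposal is precisely the standard argument behind that citation: the $\psi$-twist forces integer Ramond moding on the odd field, the twisted commutator formula applied to $(\omega,\omega)$, $(\omega,\tau)$, $(\tau,\tau)$ yields the relations of Definition \ref{Def R} with $c=l$, the truncation axiom gives smoothness, and for the converse one invokes Li's twisted existence/uniqueness theorem together with the fact that $\bar W(0,l)$ is generated by $\omega$ and $\tau$. One small bookkeeping slip: since $\tau$ has weight $\tfrac32$, the $\psi$-twisted expansion is $Y_W(\tau,z)=\sum_{m\in\Z}G_m z^{-m-\frac32}$, so the twisted odd field actually involves half-integer powers of $z$ rather than the $z^{-m-2}$ you wrote; the integer indexing of the modes $G_m$, which is the point, is correct.
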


Combining Corollary \ref{maincor} and  Lemma \ref{lemm51}, we immediately have the following results:

\begin{Proposition}\label{5.2}
Let $l\in\C$. Then all simple weak $\psi$-twisted $\bar W(0,l)$-modules are precisely irreducible smooth $\R$-modules given in Corollary \ref{maincor}.
\end{Proposition}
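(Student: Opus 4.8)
The plan is to read off the result directly from Lemma \ref{lemm51} and Corollary \ref{maincor}, once we observe that the passage between the two module structures in Lemma \ref{lemm51} preserves submodules, hence irreducibility. First I would recall the content of Lemma \ref{lemm51}: for fixed $l\in\C$ there is a mutually inverse pair of constructions. From a weak $\psi$-twisted $\bar W(0,l)$-module $(W,Y_W)$ one obtains a smooth $\R$-module of central charge $l$ on the same underlying space by taking the modes of $L(z)=Y_W(\omega,z)$ and $G(z)=Y_W(\tau,z)$; conversely, a smooth $\R$-module of central charge $l$ carries a unique weak $\psi$-twisted $\bar W(0,l)$-module structure determined by these same two assignments. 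The underlying vector superspace is unchanged under both passages, and the scalar by which $c$ acts is $l$ on both sides.

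The key point is that these two structures on the same space have exactly the same invariant subspaces. Since $\bar W(0,l)$ is generated as a vertex operator superalgebra by $\omega$ and $\tau$, every vertex operator $Y_W(a,z)$ with $a\in\bar W(0,l)$ is built from $Y_W(\omega,z)=L(z)$ and $Y_W(\tau,z)=G(z)$ by iterated products, derivatives and linear combinations; consequently any subspace of $W$ stable under all the operators $L_m$ and $G_n$ is automatically a weak $\psi$-twisted $\bar W(0,l)$-submodule, and the reverse inclusion of submodule notions is trivial. Hence a weak $\psi$-twisted $\bar W(0,l)$-module is simple precisely when the associated smooth $\R$-module of central charge $l$ is simple, and conversely.

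Combining these: a simple weak $\psi$-twisted $\bar W(0,l)$-module becomes, via Lemma \ref{lemm51}, a simple smooth $\R$-module of central charge $l$, which by Corollary \ref{maincor} is either a simple highest weight module or an induced module ${\rm Ind}_l(V)$ with $V$ a simple $\mathfrak{b}^{(t)}$-module for some $t\in\Z_+$; conversely, every such simple smooth $\R$-module of central charge $l$ in that list, equipped with the vertex-operator structure from Lemma \ref{lemm51}, is a simple weak $\psi$-twisted $\bar W(0,l)$-module. I do not anticipate a genuine obstacle; the only step that requires care is the generation statement for $\bar W(0,l)$, which is what makes the two notions of submodule coincide and lets the classification of Corollary \ref{maincor} transfer verbatim.
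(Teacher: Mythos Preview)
Your proposal is correct and follows the same approach as the paper, which simply states that the proposition follows immediately from combining Lemma \ref{lemm51} with Corollary \ref{maincor}. You have in fact supplied more detail than the paper does, by explaining why the correspondence of Lemma \ref{lemm51} preserves simplicity (via the generation of $\bar W(0,l)$ by $\omega$ and $\tau$); this extra justification is sound and fills in what the paper leaves implicit.
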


\section{Examples}\label{Sec Examples}
In this section, we give some examples of simple smooth modules over the Ramond algebra, including the highest weight modules and Whittaker modules. That is, we construct some induced modules satisfying the conditions in Theorem \ref{Th simple}, so that the theorem can be applied.

%We shall construct the Wittaker and high order Whittaker modules over the Ramond algebra, and see that they are actually the consequences of Theorem \ref{Th simple}.

\subsection{Highest weight modules}
Recall that
$$\mathcal{R}^{(0,1)}:={\bigoplus\limits_{i\geq 0}{\mathbb{C}L_i}}
\oplus {\bigoplus\limits_{j\geq 1}{\mathbb{C}G_j}} \oplus {\mathbb{C}c}.$$
Let $\mathbb{C}v$ be a one-dimensional $\mathcal{R}^{(0, 1)}$-module defined by
$$L_0v=\lambda v,\  cv=lv,\ \mathcal{R}_+v=0,$$
for $\lambda, l\in\mathbb{C}$. The $\mathfrak{b}$-module $V(\lambda,l)$ is given as
\begin{equation*}
V(\lambda,l):=
\begin{cases}
  \mathbb{C}v & \mbox{if } \lambda=\frac{1}{24}c, \\
  {\rm Ind}_{\mathcal{R}^{(0, 1)}}^{\mathfrak{b}}\mathbb{C}v & \mbox{otherwise}.
\end{cases}
\end{equation*}
Then the Verma module $M(\lambda, l)$ over the Ramond algebra (see \cite{IK2003-1}) is defined by
$$M(\lambda, l)={\rm Ind}_{\mathfrak{b}}^{\mathcal{R}}V(\lambda,l).$$
The module $M(\lambda, l)$ has the unique simple quotient $L(\lambda, l)$, which is the unique simple highest weight module with the highest weight $(\lambda, l)$, up to isomorphism. These simple modules correspond to the highest weight modules in Theorem \ref{Th equivalent}.

\subsection{Whittaker modules}
Let
$$\mathfrak{p}=\bigoplus\limits_{m\geq 1}\mathbb{C}L_m\oplus\bigoplus\limits_{n\geq 2}\mathbb{C}G_n$$
and $\phi:\mathfrak{p}\rightarrow \mathbb{C}$ be a Lie superalgebra homomorphism. Then $\phi(L_m)=\phi(G_n)=0$ for all $m\geq3$ and $n\geq 2$. All finite dimensional simple modules over $\mathfrak{p}$ have been classified in \cite{LPX2019}.
Let $\mathbb{C}w$ be a one-dimensional $(\mathfrak{p}\oplus\mathbb{C}c)$-module with
$$xw=\phi(x)w,\ cw=lw,$$
for $x\in\mathfrak{p}$ and $l\in\mathbb{C}$.
Then the Whittaker module $W(\phi, l)$ over the Ramond algebra is defined by
$$W(\phi, l)=U(\mathcal{R})\otimes_{U(\mathfrak{p}\oplus\mathbb{C}c)}\mathbb{C}w.$$
By \cite{LPX2019}, $W(\phi, l)$ is simple if and only if $\phi(L_2)\neq 0$.

Let $\phi:\mathfrak{p}\rightarrow \mathbb{C}$ be the nontrivial Lie superalgebra homomorphism with $\phi(L_2)\neq 0$, and $A_{\phi}=\mathbb{C}w\oplus\mathbb{C}u$ be a two-dimensional vector space with
$$xw=\phi(x)w,\ G_1w=u,\ cw=lw$$
for all $x\in\mathfrak{p}$. Then $A_{\phi}$ is a simple $(\mathcal{R}_+\oplus\mathbb{C}c)$-module. Consider the induced module
$$V_{\phi}=U(\mathfrak{b})\otimes_{U(\mathcal{R}_+\oplus\mathbb{C}c)}A_{\phi}.$$
It is straightforward to check that $V_{\phi}$ is a simple $\mathfrak{b}$-module if $\phi(L_2)\neq 0$. The corresponding simple induced $\mathcal{R}$-module ${\rm Ind}(V_{\phi})$ is obtained by Theorem \ref{Th simple}, which is exactly the Whittaker module $W(\phi, l)$.

\subsection{High order Whittaker modules}
For $t\in\mathbb{N}$, let
$$\mathfrak{p}^{(t)}=\bigoplus\limits_{m> t}\mathbb{C}L_m\oplus\bigoplus\limits_{n> t+1}\mathbb{C}G_n.$$
Clearly, $\mathfrak{p}^{(0)}=\mathfrak{p}$. Let $\phi_t:\mathfrak{p}^{(t)}\rightarrow \mathbb{C}$ be a Lie superalgebra homomorphism for $t\in\mathbb{Z}_+$. Then $\phi_t(L_m)=\phi_t(G_n)=0$ for $m\geq 2t+3$ and $n\geq t+2$. Let $\mathbb{C}w$ be a one-dimensional $(\mathfrak{p}^{(t)}\oplus\mathbb{C}c)$-module with
$$xw=\phi_t(x)w,\ cw=lw,$$
for $x\in\mathfrak{p}^{(t)}$ and $l\in\mathbb{C}$.
The high order Whittaker module $W(\phi_t, l)$ over the Ramond algebra is given by
$$W(\phi_t, l)=U(\mathcal{R})\otimes_{U(\mathfrak{p}^{(t)}\oplus\mathbb{C}c)}\mathbb{C}w.$$

Let $A_{\phi_t}=\mathbb{C}w_t\oplus\mathbb{C}u_t$ be a two-dimensional vector space with
$$xw_t=\phi_t(x)w_t,\ G_{t+1}w_t=u_t,\ cw_t=lw_t$$
for all $x\in\mathfrak{p}^{(t)}$. Similar to \cite{C2023} and \cite{LPX2019}, we deduce that $A_{\phi_t}$ is a simple $(\mathfrak{m}^{(t)}\oplus\mathbb{C}c)$-module if and only if $\phi_t(L_{2t+2})\neq 0$. (In fact, $A_{\phi_t}$ has a submodule spanned by $u_t$ if $\phi_t(L_{2t+2})=0$.)
Consider the induced module
$$V_{\phi_t}=U(\mathfrak{b})\otimes_{U(\mathfrak{m}^{(t)}\oplus\mathbb{C}c)}A_{\phi_t}.$$
It is straightforward to check $V_{\phi_t}$ is a simple $\mathfrak{b}$-module if $\phi_t(L_{2t+2})\neq 0$. The corresponding simple induced $\mathcal{R}$-module ${\rm Ind}(V_{\phi_t})$ is provided by Theorem \ref{Th simple}, which is exactly the high order Whittaker module $W(\phi_t, l)$.

\section*{Acknowledgments}
This work was carried out during the first author's visit to Wilfrid Laurier University. She gratefully acknowledges the hospitality of professor Kaiming Zhao and Wilfrid Laurier University, and thanks the help of professor Ran Shen. The authors also thank professor Haisheng Li for revision on applications to vertex operator superalgebras in section 5, and are grateful to professors Chongying Dong, Li Ren and Bin Wang for helpful discussions. We thank the referees for nice and helpful suggestions.


\begin{thebibliography}{lllp}

\bibitem{A1997} D. Adamovi\'{c}, Rationality of Neveu-Schwarz vertex operator superalgebras, {\it Internat. Math. Res. Notices}, {\bf 17} (1997), 865-874.

\bibitem{AJR2021} D. Adamovi\'{c}, B. Jandri\'{c}, G. Radobolja, On the $N=1$ super Heisenberg-Virasoro vertex algebra, {\it Lie groups, number theory, and vertex algebras}, 167-178, Contemp. Math. {\bf 768}, {\it Amer. Math. Soc., Providence, RI}, 2021.

\bibitem{Ba2000} K. Barron, N = 1 Neveu-Schwarz vertex operator superalgebras over Grassmann algebras and with odd formal variables, {\it
Representations and Quantizations (Shanghai, 1998)}, 9-35, China Higher Education Press (CHEP), Beijing, 2000.

\bibitem{B1981} R. Block, The irreducible representations of the Lie algebra $sl(2)$ and of the Wely algebra, {\it Adv. Math.} {\bf 139} (1981), 69-110.

\bibitem{BMRW2017} O. Blondeau-Fournier, P. Mathieu, D. Ridout, S. Wood, Superconformal minimal models and admissible Jack polynomials, {\it Adv.
Math.} {\bf 314} (2017), 71-123.

\bibitem{CLL2021} Y. Cai, D. Liu, R. L${\rm \ddot{u}}$, Classification of simple Harish-Chandra modules over the  Ramond algebra, {\it J. Agebra}, {\bf 567} (2021), 114-127.


\bibitem{CP1988} V. Chari, A. Pressley, Unitary representations of the Virasoro algebra and a conjecture of Kac, {\it Compos. Math.} {\bf 67} (1988), 315-342.

\bibitem{C2023} H. Chen, Simple restricted modules over the  Ramond algebra as weak modules for vertex operator superalgebras, {\it J. Algebra}, {\bf 621} (2023), 41-57.

\bibitem{CDLP2024} H. Chen, X. Dai, D. Liu, Y. Pei, Irreducible modules for super-Virasoro algebras from algebraic D-modules, {\it J. Pure Appl. Algebra}, {\bf 228(4)} (2024), 107512.

\bibitem{CMHY2024} T. Creutzig, R. McRae, F. Hunziker, J. Yang, $N=1$ super Virasoro tensor categories, 2024. arXiv:2412.18127 [math.QA].


\bibitem{D1986} V. Dobrev, Multiplet classification of the indecomposable highest weight modules over the Neveu-Schwarz and Ramond superalgebras, {\it Lett. Math. Phys.} {\bf 11} (1986), 225-234.

\bibitem{D1987} V. Dobrev, Characters of irreducible highest weight modules over the Virasoro and super-Virasoro algebras,  in: Proceeding of the 14th Winter School on Abstract Analysis, {\it Rend. Circ. Mat. Palermo (2) Suppl.} {\bf 14} (1987), 25-42.

\bibitem{D1989} V. Dobrev, A Weyl group for the Virasoro and  super-Virasoro algebras, {\it Commun. Math. Phys.} {\bf 124} (1989), 501-514.

\bibitem{D2001} M. D${\rm \ddot{o}}$rrzapk, Highest weight representations for the Ramond algebra, {\it Nucl. Phys. B}, {\bf 595} (2001), 605-653.

\bibitem{FFF} A. J. Feingold, I. B. Frenkel, J. F. X. Ries, Spinor construction of vertex operator algebras,
triality and $E_8^ {(1)}$, {\it Contemp. Math.} {\bf 121}, {\it Amer. Math. Soc., Providence, RI}, 1991.

\bibitem{GKO1986} P. Goddard, A. Kent, D. Olive, Unitary representations of the Virasoro and super-Virasoro algebras, {\it Commun. Math. Phys.} {\bf 103} (1986), 105-119.

\bibitem{HM2002} Y. Huang, A. Milas, Intertwining operator superalgebras and vertex tensor categories for superconformal algebras, I, {\it Commun. Contemp. Math.} {\bf 4} (2002), no. 2, 327-355.

\bibitem{IK2003-1} K. Iohara, Y. Koga, Representation theory of Neveu-Schwarz and Ramond algebras \uppercase\expandafter{\romannumeral1}: Verma modules, {\it Adv. Math.} {\bf 178} (2003), 1-65.

\bibitem{IK2003-2} K. Iohara, Y. Koga, Representation theory of Neveu-Schwarz and Ramond algebras \uppercase\expandafter{\romannumeral2}: Fock modules, {\it Ann. Inst. Fourier}, {\bf 53} (2003), 1755-1818.

\bibitem{IK2006} K. Iohara, Y. Koga, The structure of pre-Verma modules over the  Ramond algebra, {\it Lett. Math. Phys.} {\bf 78} (2006), 89-96.

\bibitem{K1997} V. Kac, Superconformal algebras and transitive group actions on quadrics, {\it Commun. Math. Phys.} {\bf 186} (1997), 233-252.

\bibitem{KW} V. G. Kac, W. Wang, Vertex operator superalgebras and representations, {\it Contemp. Math.} {\bf 175} (1994), 161-191.

\bibitem{L1}H. Li, Local systems of twisted vertex operators, vertex operator superalgebras and twisted modules. {\it Moonshine, the Monster, and related topics (South Hadley, MA, 1994)}, 203-236, Contemp. Math. {\bf 193}, {\it Amer. Math. Soc., Providence, RI}, 1996.
%\bibitem{KL1988} V. Kac, J. van de Leur, On classification of superconformal algerbas, Strings 88, Sinapore: World Scientific, 1988.

\bibitem{L}
H. Li, Local systems of vertex operators, vertex superalgebras and modules, {\it J. Pure Appl. Algebra} {\bf 109} (1996), 143-195.

\bibitem{LPX2019} D. Liu, Y. Pei, L. Xia, Whittaker modules for the super-Virasoro algebras, {\it J. Agebra Appl.} {\bf 18(11)} (2019), 1950211.

\bibitem{LPX2020} D. Liu, Y. Pei, L. Xia, Simple restricted modules for Neveu-Schwarz algebra, {\it J. Agebra} {\bf 546} (2020), 341-356.

\bibitem{MC1986} A. Meurman, A. Rocha-Caridi, Highest Weight Representations of the Neveu-Sehwarz and Ramond Algebras, {\it Commun. Math. Phys.} {\bf 107} (1986), 263-294.

\bibitem{MNTZ2023} Y. Ma, K. Nguyen, S. Tantubay, K. Zhao, Characterization of simple smooth modules, {\it J. Agebra}, {\bf 636} (2023), 1-19.

%\bibitem{MZ2013} V. Mazorchuk, K. Zhao, Characterization of simple highest weight modules, Canad. Math. Bull. 56 (2013) 606-614.

\bibitem{MZ2014} V. Mazorchuk, K. Zhao, Simple Virasoro modules which are locally finite over a positive part, {\it Selecta Math. (N. S.)} {\bf 20} (2014), 839-354.

\bibitem{Na2024} H. Nakano, Tensor structure on the module category of the triplet superalgebra $\mathcal{SW}(m)$, 2024. arXiv:2412.20898 [math.QA].

\bibitem{NS1971} A. Neveu, J. Schwarz, Factorizable dual model of pions, {\it Nucl. Phys. B} {\bf 31} (1971), 86-112.

\bibitem{R1971} P. Ramond, Dual theory for free fermions, {\it Phys. Rev. D} {\bf 3} (1971), 2415-2418.

\bibitem{S1995} Y. Su, Classification of Harish-Chandra modules over the super-Virasoro algebras, {\it Commun. Algebra} {\bf 23} (1995), 3653-3675.

\bibitem{YYX2020} H. Yang, Y. Yao, L. Xia, A family of non-weight modules over the super-Virasoro algebras, {\it J. Algebra} {\bf 547} (2020), 538-555.

\end{thebibliography}
\end{document}